\def\rr{{\mathbb R}}
\def\rn{{{\rr}^n}}
\def\nn{{\mathbb N}}
\def\ca{{\mathcal A}}
\def\fz{\infty}
\def\az{\alpha}
\def\lip{{\mathop\mathrm{\,Lip}}}
\def\lz{\lambda}
\def\dz{\delta}
\def\eps{\epsilon}
\def\ez{\epsilon}
\def\bz{\beta}
\def\gz{{\gamma}}
\def\boz{{\Omega}}
\def\wz{\widetilde}
\def\bint{{\ifinner\rlap{\bf\kern.35em--}
\int\else\rlap{\bf\kern.45em--}\int\fi}\ignorespaces}
\def\bbint{{\ifinner\rlap{\bf\kern.35em--}
\hspace{0.078cm}\int\else\rlap{\bf\kern.45em--}\int\fi}\ignorespaces}
\def\esup{\mathop\mathrm{\,esssup\,}}
\def\r{\right}
\def\lf{\left}
\newtheorem{thm}{Theorem}[section]
\newtheorem{lem}[thm]{Lemma}
\newtheorem{defn}[thm]{Definition}
\numberwithin{equation}{section}
\begin{document}

\arraycolsep=1pt

\allowdisplaybreaks

\title{\Large\bf
Everywhere differentiability of viscosity solutions to a class of Aronsson's equations 
\footnotetext{\hspace{-0.35cm}
\noindent  {\it Key words and {phrases}:}    $L^\fz$-variational problem, absolute minimizer, everywhere differentiability,
Aronsson's equation.}
\author{Juhana Siljander, Changyou Wang, and Yuan Zhou}
\date{\today}}
\maketitle

\begin{center}
\begin{minipage}{13.5cm}\small
{\noindent{\bf Abstract}\quad
For any open set $\Omega\subset\rn$ and $n\ge 2$,
we establish everywhere differentiability of viscosity solutions to the Aronsson equation
$$ \langle D_x(H(x,  Du)), D_p H(x,  Du)\rangle=0 \quad \rm in\ \ \boz,
$$
where $H$ is given by
$$H(x,\,p)=\langle A(x)p,p\rangle=\sum_{i,\,j=1}^na^{ij}(x)p_i p_j,\ x\in\Omega, \ p\in\mathbb R^n, $$
and $A=(a^{ij}(x))\in C^{1,1}(\overline\Omega,\mathbb R^{n\times n})$
is uniformly elliptic. This extends an earlier
theorem by Evans and Smart \cite{es11a} on infinity harmonic functions.
}
\end{minipage}
\end{center}

\section{Introduction}

For any open set $\Omega\subset\mathbb R^n$ with $n\ge 2$,
we consider the Aronsson equation:
\begin{equation}\label{e1.x1}
\mathcal A_{H}[u](x):=\langle D_x(H(x,  Du(x))), D_p H(x,  Du(x))\rangle=0 \quad \rm in\ \ \ \boz,
\end{equation}
where $H$ is given by
\begin{equation}\label{hamilton}
H(x,\,p)=\langle A(x)p,p\rangle=\sum_{i,\,j=1}^na^{ij}(x)p_ip_j, \ x\in\Omega
\ {\rm{and}}\ \ p\in\mathbb R^n,
\end{equation}
and the coefficient matrix $A=(a^{ij}(x))_{1\le i,j\le n}$
is uniformly elliptic: $\exists$ $L>0$ such that
\begin{equation}\label{ellipticity}
{L}^{-1}|p|^2\le \langle A(x)p,\,p\rangle\le L|p|^2,\  x\in\boz\ {\rm{and}}\ p\in\rr^n.
\end{equation}
The set of uniformly elliptic coefficient matrices is denoted as $\mathscr A(\Omega)$.

Our main interest concerns the regularity issue of viscosity solutions of the Aronsson equation (\ref{e1.x1}).
In this context, we are able to extend an important result of  Evans and Smart \cite{es11a} on infinity harmonic functions by
proving
\begin{thm}\label{t1.1}
Assume $A\in\mathscr A(\Omega)\cap C^{1,1}( \boz)$. Then any viscosity solution $u\in C(\overline\Omega)$ to the Aronsson equation
\eqref{e1.x1} is everywhere differentiable in $\boz$.
\end{thm}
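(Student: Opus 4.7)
The argument extends the Evans--Smart monotonicity and blow-up scheme for infinity harmonic functions to the variable coefficient Hamiltonian $H(x,p)=\langle A(x)p,p\rangle$, and breaks naturally into three stages. First, any viscosity solution $u$ of \eqref{e1.x1} is locally Lipschitz on $\boz$, which follows from standard comparison with $H$-cones for Aronsson equations of uniformly elliptic quadratic Hamiltonians. Fix $x_0\in\boz$ at which differentiability is to be established. The rescalings
$$u^r(y):=\frac{u(x_0+ry)-u(x_0)}{r},\qquad r\downarrow 0,$$
are uniformly Lipschitz on compact sets, so Arzel\`a--Ascoli together with stability of viscosity solutions (using $A\in C^{1,1}$) give, along any subsequence $r_j\downarrow 0$, a locally uniform limit $U\colon\rn\to\rr$ that is an entire viscosity solution of the \emph{frozen} Aronsson equation
$$\langle D_xH(x_0,DU),D_pH(x_0,DU)\rangle=0\quad\text{in }\rn.$$
Under the linear change of variables $z=A(x_0)^{1/2}y$, $U$ becomes an entire $\infty$-harmonic function on $\rn$ of linear growth.

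To promote subsequential compactness into differentiability, I would introduce the one-sided slope quantities in the $A(x_0)$-geometry,
$$S_{\pm}(x_0,r):=\sup_{x\in\pa B_r(x_0)}\frac{\pm(u(x)-u(x_0))}{|A(x_0)^{-1/2}(x-x_0)|},$$
and establish an \emph{approximate monotonicity}
$$S_{\pm}(x_0,r)\le S_{\pm}(x_0,s)+C\,s,\qquad 0<r<s,$$
via comparison with $A(x_0)$-cones (i.e.\ level-set solutions of the frozen Hamilton--Jacobi equation $H(x_0,Dv)=\text{const}$). The error term $C\,s$ is produced by freezing $A$ on $B_s(x_0)$, which costs $\|A-A(x_0)\|_{C^1(B_s(x_0))}=O(s)$ under the $C^{1,1}$ hypothesis, and the monotone limits $\ell_{\pm}(x_0):=\lim_{r\downarrow 0}S_{\pm}(x_0,r)$ therefore exist and are finite.

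The final, and most delicate, step is to show $\ell_+(x_0)=\ell_-(x_0)$, and that both equal $|A(x_0)^{1/2}q|$ for every subsequential tangent slope $q$ coming from the first stage; this simultaneously forces the tangent $U$ to be linear and pins down its slope uniquely, yielding differentiability at $x_0$. This is where the main obstacle lies: one must carry the freezing errors through the comparison-with-cones argument carefully enough that the approximate monotonicity survives in the limit $r\downarrow 0$ and still produces the equality $\ell_+=\ell_-$, mimicking the exact monotonicity enjoyed by infinity harmonic functions in \cite{es11a}. Essentially all of the new effort beyond the constant-coefficient case resides here, and the $C^{1,1}$ regularity of $A$ is used decisively to ensure the freezing error dissipates at the correct rate as $r\downarrow 0$.
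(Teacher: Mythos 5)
Your first two stages (local Lipschitz bounds, blow-ups along subsequences, approximate monotonicity of the slope functionals via comparison with cones in the $A(x_0)$-geometry) are consistent with what the paper records in Lemma~\ref{t3.1}, and the freezing error of order $O(s)$ from $A\in C^{1,1}$ is the right bookkeeping there. (One caveat: an entire $\infty$-harmonic function of linear growth is not known to be linear for $n\ge 3$, so you cannot get linearity of the tangents from a Liouville theorem; but your cone-comparison route does recover it, so this is repairable.)

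The genuine gap is in your third stage, and it is not a technical loose end but the missing core of the proof. Cone comparison and the monotone limits $\ell_\pm(x_0)$ only tell you that every subsequential blow-up is linear with the \emph{same modulus} $H(x_0,q)=\lip_{d_A}u(x_0)$; the identity $\ell_+=\ell_-$ does nothing to exclude that different subsequences $r_j\downarrow 0$ select different \emph{directions} $q$ on that level set, which is exactly what differentiability must rule out. This is precisely the point where Evans--Smart had to abandon pure comparison with cones even in the constant-coefficient case, so ``carrying the freezing errors through the cone argument'' cannot close the proof. The paper's mechanism is entirely different at this step: one regularizes the Aronsson equation by adding $-\eps\,{\rm div}(A\nabla u^\eps)$ (Theorems~\ref{existence} and~\ref{approximation}, with the approximation $A_\eps$ of $A$ and a smallness/ellipticity normalization achieved by rescaling), proves for the smooth solutions a quantitative \emph{flatness implies gradient constraint} estimate, namely $|Du^\eps|^2\le u^\eps_n+C\lz^{1/2}$ on $B(0,1)$ whenever $u^\eps$ is $\lz$-close to $x_n$ on $B(0,2)$ (Theorem~\ref{l2.4}, a Bernstein-type maximum principle applied to $\phi^2\bigl(|Du^\eps|^2-u^\eps_n\bigr)_+^2+\bz(u^\eps-x_n)^2+\lz|Du^\eps|^2$), and then, assuming two tangents ${\bf a}=e_n\ne{\bf b}$ with $|{\bf a}|=|{\bf b}|$, uses Lemma~\ref{l3.3} at the scale of ${\bf b}$ to produce a point where $D\wz u^\eps$ is within $4\eta$ of ${\bf b}$; since $b_n<1$, this contradicts $|D\wz u^\eps|^2\le \wz u^\eps_n+\theta/4$. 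Without this PDE-based flatness estimate (or an equivalent substitute such as the adjoint method), your outline does not yield uniqueness of the derivative.
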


Note that when $A$ is the identity matrix of order $n$, the Aronsson equation (\ref{e1.x1}) becomes the infinity Laplace equation:
\begin{equation}\label{inf_lap}
\Delta_\infty u :=\sum_{i,j=1}^n u_{x_i}u_{x_j}u_{x_ix_j}=0 \quad {\rm{in}}\quad \Omega.
\end{equation}
G. Aronsson \cite{a1,a2,a3,a4} initiated the study of the infinity Laplace equation (\ref{inf_lap}) by deriving it as the Euler-Lagrange equation,
in the context of $L^\infty$-variational problems, of absolute minimal Lipschitz extensions (AMLE)
or equivalently absolute minimizers (AM) of
\begin{equation}\label{AMLE}
\inf\Big\{\esup_{x\in \Omega} |Du|^2: \ u\in {\rm{Lip}}(\Omega)\Big\}.
\end{equation}
Employing the theory of viscosity solutions of elliptic equations,  Jensen~\cite{j1993} has first proved the equivalence
between AMLEs and viscosity solutions of (\ref{inf_lap}), and the uniqueness of both AMLEs and infinity harmonic functions
under the Dirichlet boundary condition. See \cite{pssw} and \cite{as} for alternative proofs.
For further properties of infinity harmonic functions, we refer the readers to the paper by Crandall-Evans-Gariepy \cite{ceg} and
the survey articles by Aronsson-Crandall-Juutinen~\cite{acj} and Crandall~\cite{c2008}.

For $L^\infty$-variational problems involving Hamiltonian functions
$H=H(x,z,p)\in C^2(\Omega\times\mathbb R\times\mathbb R^n)$, Barron, Jensen and Wang~\cite{bjw} have proved that  an absolute minimizer of
\begin{equation}\label{e1.x2}
\mathscr F_\infty(u,\Omega)=\esup_{x\in \Omega} H(x, u(x), Du(x))
\end{equation}
is a viscosity solution of \eqref{e1.x1}, provided $H$ is level set convex in $p$-variable.
Recall that a Lipschitz function $u\in \lip(\boz)$ is an
{\it absolute  minimizer}  for $\mathscr F_\infty$, if for every open subset
$U\Subset \boz$  and $v\in \lip(U)$, with $v|_{\partial U}= u|_{\partial U}$, it holds
$$\mathscr F_\infty(u,U) \le \mathscr F_\infty(v,U).$$
See \cite{cwy}, \cite{acjs}, \cite{jwy}, and \cite{ksz} for related works on
both Aronsson's equations (\ref{e1.x1}) and absolute minimizers of $\mathscr F_\infty$.

The issue of regularity of infinity harmonic functions (or viscosity solutions to \eqref{inf_lap})
has attracted great interests.
When $n=2$,  Savin \cite{s05} showed the interior $C^1$-regularity,
and  Evans-Savin \cite{es08} established the interior $C^{1,\,\alpha}$-regularity.
Wang and Yu \cite{wy1} have established the $C^1$-boundary  regularity.
Wang and Yu \cite{wy} have also extended Savin's $C^1$-regularity to
the Aronsson equation (\ref{e1.x1}) for uniformly convex $H(p)\in C^2(\mathbb R^2)$.
When $n\ge 3$, Evans-Smart \cite{es11a,es11b} have established the interior everywhere differentiability
of infinity harmonic functions, Wang-Yu \cite{wy1} have proved the boundary differentiability of infinity harmonic functions, 
and Lindgren \cite{l} has shown the everywhere differentiability
for inhomogeneous infinity Laplace equation.

In this paper, we are able to prove Theorem 1.1 by extending the techniques  by Evans-Smart \cite{es11a,es11b} to the Aronsson equation 
(\ref{e1.x1}) for $A \in\mathscr A(\Omega)\cap C^{1,1}(\Omega)$ and $n \ge 2$.  It is an interesting question to ask whether
Theorem 1.1 holds for $A\in\mathscr A(\Omega)\cap C^1(\Omega)$.



\section{Preliminaries}

In this section, we will describe a regularization scheme of the Aronsson equation (\ref{e1.x1}).
First, let's recall the definition of viscosity solutions of the Aronsson equation~\eqref{e1.x1}.

\begin{defn}
A function $u \in C(\overline\Omega)$ is a viscosity subsolution (supersolution) of the Aronsson equation (\ref{e1.x1})
if, for every $x \in \Omega$ and every $\varphi \in C^2(\Omega)$ such that if $u-\varphi$ has a local maximum (minimum) at $x$ then
\begin{equation}\label{e2.x1}
\mathcal A_{H} [\varphi](x) \  \ge (\le) \ 0.
\end{equation}
A function $u$ is a viscosity solution of (\ref{e1.x1}) if $u$ is both viscosity subsolution and supersolution.
\end{defn}

For $\epsilon>0$ and a uniformly elliptic matrix $B\in \mathscr A(\Omega)\cap C^\infty(\Omega)$,
set the Hamiltonian function $H_B$ by
$$H_B(x,p)=\langle B(x) p, p\rangle,
\ x\in\Omega \ {\rm{and}}\ p\in\mathbb R^n.$$
We consider an $\epsilon$-regularized Aronsson equation (\ref{e1.x1}) associated
with $B$ and $H_B$:
 \begin{equation}\label{e2.x2}
\begin{cases}
-\mathcal A_{H_B}^{\epsilon}[u^\epsilon]:=-\mathcal A_{H_B}[u^\eps] -\eps {\rm{div}}(B\nabla u ^{\ez}) = 0 &\quad \text{in}\quad \boz ,\\
\ \qquad\qquad\qquad\qquad\qquad\qquad\qquad \ \ \ \ u ^\ez = u &\quad \text{on}\quad \partial \boz.
\end{cases}
\end{equation}
For (\ref{e2.x2}), we  have the following theorem.
\begin{thm}\label{existence} For $\epsilon>0$, $B\in \mathscr A(\Omega)\cap C^{\infty}(\Omega)$, and $u\in C^{0,1}(\Omega)$, there exists a unique solution $u^\epsilon\in C^{\infty}(\Omega)\cap C(\overline\Omega)$ of the equation (\ref{e2.x2}).
\end{thm}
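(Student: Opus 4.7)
The plan is to recognize that after expansion, equation (\ref{e2.x2}) is a quasilinear uniformly elliptic PDE with $C^\infty$ coefficients, and then invoke the classical theory of such equations. A direct computation using $D_pH_B(x,p)=2B(x)p$ and $D_xH_B(x,p)_k=\pa_kB^{ij}(x)p_ip_j$ rewrites the equation in the non-divergence form
\begin{align*}
-\bigl[4(B\nabla u^\ez)^k(B\nabla u^\ez)^i+\ez B^{ki}(x)\bigr]\pa_{ki}u^\ez+b(x,\nabla u^\ez)=0,
\end{align*}
where $b$ is smooth and of at most cubic growth in the gradient. The principal symbol is symmetric with eigenvalues in $[\ez L^{-1},\,4L^2|\nabla u^\ez|^2+\ez L]$; in particular, for each fixed $\ez>0$ the equation is strictly uniformly elliptic once $\|\nabla u^\ez\|_\infty$ is controlled.

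For existence I would run the method of continuity on the homotopy
\begin{align*}
-t\,\mathcal A_{H_B}[v_t]-\ez\,\mathrm{div}(B\nabla v_t)=0\ \text{in}\ \Omega,\quad v_t=u\ \text{on}\ \pa\Omega,\quad t\in[0,1].
\end{align*}
At $t=0$ this is a linear uniformly elliptic divergence-form problem with $C^\infty$ coefficients and Lipschitz boundary data, hence admits a smooth solution by classical theory (Gilbarg--Trudinger). To reach $t=1$, the required uniform a priori estimates along the family proceed as follows. First, an $L^\infty$-bound $\|v_t\|_\infty\le\|u\|_\infty$ follows from the weak maximum principle since the operator carries no zero-order terms. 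Second, the crucial Lipschitz estimate is obtained by a Bernstein-type argument: differentiating the equation and testing against $w=|\nabla v_t|^2$, the $\ez$-term produces a genuine uniformly elliptic operator on $w$, which, combined with a boundary-barrier construction from the Lipschitz boundary data, yields $\|\nabla v_t\|_{L^\infty(\Omega')}$ on every $\Omega'\Subset\Omega$ in terms of $\|u\|_{C^{0,1}}$, $\|B\|_{C^{1,1}}$, and $\ez$. Third, once the gradient is controlled, the principal symbol is uniformly elliptic with fixed constants, so the Krylov--Safonov / Ladyzhenskaya--Uraltseva theory supplies a uniform interior $C^{1,\az}$ estimate. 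This is enough to close the continuity method and produce a solution $u^\ez\in C^{1,\az}(\Omega)\cap C(\overline\Omega)$ to (\ref{e2.x2}).

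With $u^\ez\in C^{1,\az}$ in hand, iterating Schauder estimates on the linearization (whose coefficients are now $C^\az$ and inherit higher smoothness from $B\in C^\infty$) upgrades $u^\ez$ to $C^\infty(\Omega)$. Uniqueness follows from the classical comparison principle: the equation can be written $F(x,Du^\ez,D^2u^\ez)=0$ with $F$ continuous and strictly decreasing in the Hessian argument, so two classical solutions agreeing on $\pa\Omega$ cannot differ in the interior. The main obstacle I anticipate is the uniform Lipschitz bound in the second step, since the first-order term $b$ has cubic growth in $\nabla v_t$ and the top-order coefficient degenerates in its absence; the $\ez$-regularization prevents this degeneracy and is precisely what makes the Bernstein calculation close.
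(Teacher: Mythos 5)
Your route is genuinely different from the paper's. The paper does not use the method of continuity at all: it obtains $u^\epsilon$ variationally, as the minimizer of the exponential-growth functional $\mathcal I_\epsilon[v]=\int_\Omega \exp\big(\epsilon^{-1}H_B(x,\nabla v)\big)\,dx$ over the class of $W^{1,1}$ functions with boundary values $u$ and finite energy. Existence of a minimizer follows from the direct method (convexity of $H_B(x,\cdot)$ gives weak lower semicontinuity, implemented via the power-series expansion of the exponential), the Euler--Lagrange equation of $\mathcal I_\epsilon$ is precisely \eqref{e2.x2}, uniqueness comes from the maximum principle, and interior smoothness is quoted from the regularity theory for functionals of exponential growth (Lieberman \cite{l1,l2}, Duc--Eells \cite{de}). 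What the variational construction buys is exactly the point where your argument has a gap: it produces a solution attaining the prescribed boundary values without ever needing estimates up to $\partial\Omega$.

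The gap in your proposal is the closing of the continuity method. Both the method of continuity and the Leray--Schauder alternative for quasilinear equations require a priori bounds for the family $\{v_t\}$ in $C^{1,\beta}(\overline\Omega)$, i.e.\ \emph{global} gradient and H\"older-gradient estimates; these in turn require $\partial\Omega$ and the boundary data to be of class $C^{2,\alpha}$ (or similar). Here $\Omega$ is an arbitrary open set and the datum $u$ is merely Lipschitz, so the boundary-barrier you invoke can at best give a modulus of continuity at boundary points satisfying an exterior condition -- it does not give the global $C^{1,\alpha}(\overline\Omega)$ compactness that the continuity argument needs. To repair this along your lines you would have to exhaust $\Omega$ by smooth subdomains with smoothed data, solve there, and pass to the limit using your interior Bernstein and Krylov--Safonov estimates together with barriers for equicontinuity up to the boundary; this is doable but is a nontrivial extra layer that your write-up elides. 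The interior parts of your argument (the non-divergence form, the ellipticity range $[\epsilon L^{-1},\,CL^2|\nabla u^\epsilon|^2+\epsilon L]$, the Bernstein gradient bound made to close by the $\epsilon$-term, the Schauder bootstrap to $C^\infty(\Omega)$, and uniqueness via the comparison principle for the gradient-dependent but $z$-independent elliptic operator) are all sound and in fact parallel the a priori estimates the paper develops separately in Section 3.
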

\begin{proof} Consider the minimization problem of the functional of exponential growth
$$c_\epsilon:=\inf\Big\{\mathcal I_\epsilon[v]:=\int_\Omega \exp\big(\frac{1}{\epsilon}H_B(x,\nabla v)\big)\,dx\ \big| \  v\in {\bf K}_\epsilon\Big\},$$
where ${\bf K}_\epsilon$ is the set of admissible functions of the functional $\mathcal I_\epsilon$
defined by
$${\bf K}_\epsilon=\Big\{ w\in W^{1,1}(\Omega) \big|
\int_\Omega \exp\big(\frac{1}{\epsilon}H_B(x,\nabla w)\big)\,dx<+\infty,
\ w=u \ {\rm{on}}\ \partial\Omega\Big\}.
$$
Note that since $u\in {\bf K}_\epsilon$, ${\bf K}_\epsilon\not=\emptyset$.  Let $\{u_m\}\subset {\bf K}_\epsilon$ be a minimizing sequence, i.e., $\displaystyle\lim_{m\rightarrow\infty}\mathcal I_\epsilon[u_m]=c_\epsilon$. Without loss of generality,
we may assume that there exists $u^\epsilon\in {\bf K}_\epsilon$ such that
$u_m\rightarrow u^\epsilon$ uniformly on $\Omega$, and
$Du_m\rightharpoonup Du^\epsilon$ in $L^q(\Omega)$ for any $1\le q<+\infty$.
Since $H_B(x,p)=\langle B(x) p, p\rangle$ is uniformly convex in $p$-variable, by the lower semicontinuity we have that
\begin{eqnarray*}
\mathcal I_\epsilon[u^\epsilon]&=&\int_\Omega \exp\big(\frac{1}{\epsilon}H_B(x,\nabla u^\epsilon)\big)\,dx=\sum_{k=0}^\infty \int_\Omega \frac{\big(\epsilon^{-1}H_B(x,\nabla u^\epsilon))^k}{k!}\,dx\\
&\le&\liminf_{m\rightarrow\infty}\sum_{k=0}^\infty \int_\Omega \frac{\big(\epsilon^{-1}H_B(x,\nabla u_m))^k}{k!}\,dx\\
&=&\liminf_{m\rightarrow\infty}\int_\Omega \exp\big(\frac{1}{\epsilon}H_B(x,\nabla u_m)\big)\,dx
=\liminf_{m\rightarrow\infty} \mathcal I_\epsilon[u_m]=c_\epsilon.
\end{eqnarray*}
Hence $c_\epsilon= \mathcal I_\epsilon[u^\epsilon]$ and $u^\epsilon$ is a minimizer of
$\mathcal I_\epsilon$ over the set ${\bf K}_\epsilon$.
Direct calculations imply that the Euler-Lagrange equation of $u^\epsilon$ is (\ref{e2.x2}). The uniqueness of $u^\epsilon$ follows
from the maximum principle that is applicable of (\ref{e2.x2}). The smoothness of $u^\epsilon$
follows from the theory of quasilinear uniformly elliptic equations, and the reader can find
its proofs in the papers by Lieberman \cite{l1} page 47-49 and \cite{l2} lemma 1.1 (see also
the paper by Duc-Eells \cite{de}).
\end{proof}

Note that any viscosity solution $u\in C(\overline\Omega)$ of the Aronsson equation
(\ref{e1.x1}) is locally Lipschitz continuous, i.e. $u\in C^{0,1}_{\rm{loc}}(\Omega)$
(see \cite{bfm} and \cite{ksz}). Since we consider the interior regularity of $u$,
we may simply assume that $u\in C^{0,1}(\Omega)$.

Now we will indicate that under suitable conditions on $A$,
any viscosity solution $u\in C^{0,1}(\Omega)$ of the Aronsson equation (\ref{e1.x1}) can be approximated by smooth solutions $u^\epsilon$ of $\epsilon$-regularized
equations (\ref{e2.x2}) associated with suitable $H_B$'s. For this, we recall that
for any $A\in \mathscr A(\Omega)\cap C^{1,1}(\Omega)$, it is a standard fact that
there exists $\{A_\epsilon\}\subset \mathscr A(\Omega)\cap C^\infty(\Omega)$ such that
\begin{itemize}
\item[(2.1)] $\big\|A_\epsilon\big\|_{C^{1,1}(\Omega)}\le 2\big\|A\big\|_{C^{1,1}(\Omega)}$
for all $\epsilon>0$.
\item [(2.2)] For any $\alpha\in (0,1)$, $A_\epsilon\rightarrow A$ in $C^{1,\alpha}(\Omega)$
as $\epsilon\rightarrow 0$.
\end{itemize}

\begin{thm}\label{approximation} For  any $A\in \mathscr A(\Omega)\cap C^{1,1}(\Omega)$ with ellipticity
constant $L<2^{\frac15}$ (see (\ref{ellipticity})), let $\{A_\epsilon\}\subset \mathscr A(\Omega)\cap C^\infty(\Omega)$ satisfy the properties
(2.1) and (2.2). 
Assume that $u\in C^{0,1}(\Omega)$ is a viscosity solution of the Aronsson equation
(\ref{e1.x1}), and $\{u^\epsilon \}\subset C^\infty(\Omega)\cap C(\overline \Omega)$ are classical solutions
of the $\epsilon$-regularized equation (\ref{e2.x2}) on $\Omega$, with $B$ and $H_B$ replaced by $A_\epsilon$
and $H_{A_\epsilon}$ respectively.
Then there
exists a constant $\delta_0=\dz_0(\boz, \|A\|_{L^\infty(\Omega)})>0$ such that if $\| DA\|_{L^\infty( \boz)}\le \dz_0$,
 then $u^\epsilon\rightarrow u$ in $C^0_{\rm{loc}}(\Omega)$.
\end{thm}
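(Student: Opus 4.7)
The strategy is the standard vanishing-viscosity/approximation scheme: derive uniform estimates on $\{u^\epsilon\}$, extract a limit by compactness, verify that the limit is a viscosity solution of the Aronsson equation~\eqref{e1.x1}, and identify it with $u$ via a comparison principle. The smallness hypotheses $L<2^{1/5}$ and $\|DA\|_{L^\infty(\Omega)}\le\delta_0$ enter only in the uniform gradient estimate and in the uniqueness step; the rest of the argument is standard.

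\emph{Uniform bounds.} The maximum principle for~\eqref{e2.x2} gives $\|u^\epsilon\|_{L^\infty(\Omega)}\le\|u\|_{L^\infty(\partial\Omega)}$ uniformly in $\epsilon$. For a local Lipschitz bound, I would apply a Bernstein-type argument to $w=|Du^\epsilon|^2$ in the quasilinear equation~\eqref{e2.x2}: since $A_\epsilon$ is uniformly bounded in $C^{1,1}(\Omega)$ by~(2.1) and $H_{A_\epsilon}$ is uniformly convex in $p$, the differentiated equation yields a maximum principle for $w$ giving
\[
\|Du^\epsilon\|_{L^\infty(\Omega')}\le C\bigl(\Omega',\|u\|_{C^{0,1}(\Omega)},\|A\|_{C^{1,1}(\Omega)}\bigr)\quad\text{for every }\Omega'\Subset\Omega,
\]
with $C$ independent of $\epsilon$. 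Alternatively, the exponential variational characterization of Theorem~\ref{existence} together with the Lipschitz bound on $u$ provides uniform control of $\mathcal I_\epsilon[u^\epsilon]$ that can be upgraded to a gradient bound.

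\emph{Compactness and stability.} By Arzel\`a--Ascoli and a diagonal argument, there exist $\epsilon_k\to 0$ and $v\in C(\Omega)$ with $u^{\epsilon_k}\to v$ in $C^0_{\mathrm{loc}}(\Omega)$; a barrier construction at $\partial\Omega$, using the strict ellipticity of~\eqref{e2.x2}, extends this to $v\in C(\overline\Omega)$ with $v=u$ on $\partial\Omega$. To check that $v$ solves~\eqref{e1.x1} in the viscosity sense, take $\varphi\in C^2(\Omega)$ with $v-\varphi$ attaining a strict local maximum at $x_0$, and pick $x_k\to x_0$ so that $u^{\epsilon_k}-\varphi$ has a local maximum at $x_k$. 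Since $u^{\epsilon_k}$ is a classical solution of~\eqref{e2.x2} and the operator $-\mathcal A^\epsilon_{H_{A_\epsilon}}$ is degenerate elliptic in $D^2 u$, evaluation at $x_k$ gives
\[
\mathcal A_{H_{A_{\epsilon_k}}}[\varphi](x_k)+\epsilon_k\,\mathrm{div}\bigl(A_{\epsilon_k}\nabla\varphi\bigr)(x_k)\ge 0.
\]
Sending $k\to\infty$ and using $A_{\epsilon_k}\to A$ in $C^{1,\alpha}(\Omega)$ from~(2.2) yields $\mathcal A_H[\varphi](x_0)\ge 0$; the supersolution inequality is dual.

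\emph{Identification and conclusion.} Under the smallness hypotheses $L<2^{1/5}$ and $\|DA\|_{L^\infty(\Omega)}\le\delta_0$, a comparison principle holds for viscosity solutions of~\eqref{e1.x1} sharing boundary data (see \cite{jwy,ksz}); applying it to $v$ and $u$ gives $v=u$ on $\overline\Omega$, so every subsequential limit coincides with $u$ and the whole family converges $u^\epsilon\to u$ in $C^0_{\mathrm{loc}}(\Omega)$. The main obstacle is the combination of the $\epsilon$-independent gradient bound and the comparison step: the Bernstein-type computation for $|Du^\epsilon|^2$ produces bad terms involving $DA_\epsilon$ that can only be absorbed when $\|DA\|_{L^\infty}$ is small, and comparison for Aronsson equations with $x$-dependent Hamiltonians is generally subtle, forcing the ellipticity constraint $L<2^{1/5}$ as well as the smallness of $\|DA\|_{L^\infty}$.
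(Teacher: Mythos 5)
Your overall strategy --- uniform interior gradient bounds, Arzel\`a--Ascoli, stability of viscosity solutions under locally uniform convergence, and identification of the limit via uniqueness of the Dirichlet problem for \eqref{e1.x1} --- is exactly the paper's route, and most of the steps are right. Two points of attribution are off, however. The interior gradient bound is the paper's Theorem~\ref{first_lemma} (a Bernstein argument applied to $\phi^2|Du^\epsilon|^2/2+\beta(u^\epsilon)^2/2$), and its constant depends only on $\|A\|_{C^{1,1}(\Omega)}$, $\|u\|_{C(\overline\Omega)}$ and ${\rm dist}(V,\partial\Omega)$; no smallness of $\|DA\|_{L^\infty}$ is needed there, contrary to what you assert. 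Likewise, the uniqueness of viscosity solutions with prescribed boundary data is quoted from \cite{bfm} and \cite{ksz} without any smallness hypothesis.

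The genuine gap is in the boundary step, which you dispatch with ``a barrier construction at $\partial\Omega$, using the strict ellipticity of~\eqref{e2.x2}.'' The strict ellipticity of the regularized equation comes only from the term $\epsilon\,{\rm div}(A_\epsilon\nabla u^\epsilon)$ and degenerates as $\epsilon\to 0$, so a barrier relying on it yields a boundary modulus of continuity that blows up as $\epsilon\to 0$; you could then not conclude that the subsequential limit $v$ attains the boundary values $u|_{\partial\Omega}$, and the comparison/uniqueness step would have nothing to compare. This is precisely where the hypotheses $L<2^{1/5}$ and $\|DA\|_{L^\infty(\Omega)}\le\delta_0$ are actually used in the paper: Theorem~\ref{l2.3} shows that $w(x)=\lambda|x-y_0|^{\gamma}+u(y_0)$ is a supersolution of the \emph{full} regularized operator, uniformly in $\epsilon$, which requires $\frac{2-\gamma}{L^2}-L^2>0$ (hence $L<2^{1/4}$ for the approximants $A_\epsilon$, whence $L<2^{1/5}$ for $A$) and requires absorbing the first-order term $a^{ij}_k w_i w_j a^{k\ell}w_\ell$, which forces $\|DA\|_{L^\infty}\le\delta_0$. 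Once that $\epsilon$-uniform boundary H\"older estimate is in place, the rest of your argument goes through as written.
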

\begin{proof} From Theorem 3.1, we have that for any compact subset $K\Subset\Omega$,
\begin{eqnarray*}
\big\|Du^\epsilon\big\|_{C(K)}&\le& C\Big({\rm{dist}}(K, \partial\Omega), \|u\|_{C(\overline\Omega)}, \|A_\epsilon\|_{C^{1,1}(\Omega)}\Big)\\
&\le& C\Big({\rm{dist}}(K, \partial\Omega), \|u\|_{C(\overline\Omega)}, \|A\|_{C^{1,1}(\Omega)}\Big),
\ \forall\ \epsilon>0.
\end{eqnarray*} 
This implies that there exists a $\hat u\in C^{0,1}_{\rm{loc}}(\Omega)$ such that, after passing to a subsequence,
\begin{equation}\label{loc_uniform}
u_\epsilon\rightarrow \hat u \ {\rm{in}}\ C^0_{\rm{loc}}(\Omega).
\end{equation}
Since $\{A_\epsilon\}$ satisfies (2.1) and (2.2), there exists $\epsilon_0>0$ such that for any $0<\epsilon\le\epsilon_0$, it holds
that $\displaystyle\|A_\epsilon\|_{L^\infty(\Omega)}\le 2\|A\|_{L^\infty(\Omega)},$
and the ellipticity constant $L_\epsilon$ of $A_\epsilon$ satisfies $L_\epsilon\le 2^\frac14$.
Let $\dz_0>0$ be the constant given by Theorem 3.2 and
assume $\displaystyle \|DA\|_{L^\infty(\Omega)}\le\frac{\delta_0}2$. Then there exists $0<\epsilon_1\le\epsilon_0$
such that $\displaystyle \|DA_\epsilon\|_{L^\infty(\Omega)}\le\delta_0$ for any $\epsilon<\epsilon_1$.
Thus Theorem 3.2 below is applicable to $u_\epsilon$ for any $0<\epsilon<\epsilon_1$ and we conclude
that there exist $\gamma\in (0,1)$ and $C>0$, independent of $0<\epsilon<\epsilon_1$, such that
\begin{equation}\label{bdry_uniform}
\big|u_\epsilon(x)-u(x_0)\big|\le C|x-x_0|^\gamma, \ \forall\ x\in\Omega, \ x_0\in\partial\Omega.
\end{equation}
From (\ref{loc_uniform}) and (\ref{bdry_uniform}), we see that
$$|\hat{u}(x)-u(x_0)|\le C|x-x_0|^\gamma, \ \forall\ x\in\Omega, \ x_0\in\partial\Omega.$$
This implies that $\hat{u}\in C(\overline\Omega)$ and $\hat u\equiv u $ on $\partial\Omega$.
By the compactness property of viscosity solutions of elliptic equations (see Crandall-Ishii-Lions \cite{CIL}), we know that $\hat{u}
\in C(\overline\Omega)$ is a viscosity solution of the Aronsson equation (\ref{e1.x1}) associated with
$A$ and $H_A$. Since $\hat{u}\equiv u$ on $\partial\Omega$, it follows from the uniqueness
theorem of (\ref{e1.x1}) (see \cite{bfm} and \cite{ksz}) that $\hat{u}=u$. This also implies that
$u^\epsilon\rightarrow u$ in $C^0_{\rm{loc}}(\Omega)$ for $\epsilon\rightarrow 0$.
\end{proof}

\section{A priori estimates}

Motivated by \cite{es11a, es11b}, we will establish some necessary {a priori} estimates
of smooth solutions $u^\epsilon$ of the equation (\ref{e2.x2}) associated with $A_\epsilon$
satisfying (2.1) and (2.2), which is the crucial ingredient to establish everywhere differentiability
of viscosity solution of the Aronsson equation (\ref{e1.x1}).

In this section, we will assume $A\in\mathscr A(\Omega)\cap C^\infty(\Omega)$,
and $u^\ez\in C^{\infty}(\Omega)\cap C(\overline\Omega)$ is a solution
of the $\epsilon$-regularized equation \eqref{e2.x2} with $B$ and
$H_B$ replaced by $A$ and $H_A$.

\subsection{Lipschitz estimates}
We begin with the following theorems.
\begin{thm}\label{first_lemma} For $u\in C^{0,1}(\Omega)$ and
$A\in\mathscr A(\Omega)\cap C^\infty(\Omega)$,
assume $u^\ez\in C^{\infty}(\Omega)\cap C(\overline\Omega)$ is a solution
of the $\epsilon$-regularized equation \eqref{e2.x2}, with $B$ and
$H_B$ replaced by $A$ and $H_A$. Then we have the estimates
\begin{equation}\label{max_est}
\max_{\overline{\Omega}} |u^{\eps}| \le \max_{\overline\Omega} |u|,
\end{equation}
and for each open set $V \Subset \Omega$, there exists $C>0$ depending on $n, {L}, \|u\|_{C(\overline\Omega)}, {\rm{dist}}(V, \partial\Omega),$ and $\|A\|_{C^{1,1}(\Omega)}$ such
that
\begin{equation}\label{gradient_bound}
\max_{\overline{V}} |D u^\eps| \le C.
\end{equation}
\end{thm}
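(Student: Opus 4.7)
The plan is to establish \eqref{max_est} and \eqref{gradient_bound} separately, using the divergence form of the regularized equation.

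\emph{Step 1: the $L^\infty$ bound \eqref{max_est}.} Setting $\Phi := \exp(\epsilon^{-1} H_A(x,\nabla u^\epsilon)) > 0$, the same computation used in Theorem \ref{existence} to derive the Euler-Lagrange equation shows that \eqref{e2.x2} is equivalent (upon dividing by the positive factor $\Phi$) to the linear divergence-form equation
\[
\operatorname{div}\bigl(\Phi\, A\, \nabla u^\epsilon\bigr) = 0 \quad\text{in }\Omega,
\]
whose coefficient matrix $\Phi A$ is pointwise symmetric and positive definite, and which has no zeroth-order term. Since $u^\epsilon = u$ on $\partial\Omega$, the weak maximum principle delivers $\min_{\partial\Omega} u \le u^\epsilon \le \max_{\partial\Omega} u$ on $\overline\Omega$, giving \eqref{max_est}.

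\emph{Step 2: the interior gradient bound \eqref{gradient_bound}.} I would use a Bernstein-type argument. Set $v := u^\epsilon$ and $w := |\nabla v|^2$. Differentiating \eqref{e2.x2} in each direction $x_k$, multiplying by $v_{x_k}$, and summing in $k$ produces a pointwise identity in which the second-order part is a uniformly elliptic operator (obtained from $a^{ij}\partial_{ij}$ plus an extra $\epsilon^{-1}$-weighted positive piece in the direction $A\nabla v$) applied to $w$, with right-hand side of the schematic form
\[
-2\, a^{ij}(x)\, v_{x_i x_k}\, v_{x_j x_k} \;-\; \epsilon^{-1} Q(x,\nabla v, D^2 v) \;+\; E(x,\nabla v, D^2 v),
\]
where the first term is non-positive by ellipticity, $Q\ge 0$ by the uniform convexity of $H_A(x,\cdot)$, and $E$ consists of error terms involving $\nabla A$ and $\nabla^2 A$ which are polynomially controlled in $|\nabla v|$ by $\|A\|_{C^{1,1}(\Omega)}$. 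Choosing a cutoff $\eta \in C^\infty_c(\Omega)$ with $\eta\equiv 1$ on $V$, I would then consider the auxiliary quantity $W := \eta^{2\alpha} w$; evaluating the first- and second-order conditions at an interior maximum point of $W$ and combining with the pointwise identity yields, after standard absorption, the bound $w \le C$ on $V$ for a constant $C$ depending only on $n$, $L$, $\|u\|_{C(\overline\Omega)}$, $\operatorname{dist}(V,\partial\Omega)$, and $\|A\|_{C^{1,1}(\Omega)}$ (using Step 1 to control $\|u^\epsilon\|_{L^\infty}$).

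\emph{Main obstacle.} The principal difficulty is that the Bernstein computation produces terms of order $\epsilon^{-1}$ which threaten to blow up as $\epsilon\downarrow 0$. The saving point is that, thanks to the uniform convexity of $H_A$ in $p$, all such singular contributions come with the correct sign and are either absorbed by the coercive principal part or discarded; only the $\epsilon$-independent ``good'' terms remain, producing a uniform bound. Alternatively, since \eqref{e2.x2} is a quasilinear uniformly elliptic equation in divergence form whose structure conditions fit the framework of Lieberman \cite{l1,l2} (already quoted in the proof of Theorem \ref{existence}), one may invoke his interior gradient estimates directly to obtain \eqref{gradient_bound} with the claimed dependence.
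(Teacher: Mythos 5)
Your Step 1 is fine and matches the paper's (very terse) justification: the paper simply invokes the maximum principle for \eqref{e2.x2}, and your observation that the equation is the Euler--Lagrange equation $\mathrm{div}\bigl(e^{H_A(x,\nabla u^\epsilon)/\epsilon}A\nabla u^\epsilon\bigr)=0$, a divergence-form equation with positive definite coefficients and no zeroth-order term, is a correct way to make that precise.

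Step 2, however, has a genuine gap: the auxiliary function $W=\eta^{2\alpha}|\nabla v|^2$ is not enough to close the Bernstein argument, and the ``standard absorption'' you appeal to would fail. The coercivity produced by $L_\epsilon(|Du^\epsilon|^2)$ is only $2|D^2u^\epsilon A Du^\epsilon|^2+\epsilon\,a^{ij}u^\epsilon_{si}u^\epsilon_{sj}$, i.e.\ full second-derivative control only in the single direction $ADu^\epsilon$ plus an $O(\epsilon)$ remnant; meanwhile the error terms coming from $DA$, $D^2A$ and from commuting $L_\epsilon$ past the cutoff are of size $C|D^2u^\epsilon ADu^\epsilon|\,|Du^\epsilon|^2$ and $C|Du^\epsilon|^4$. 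After Young's inequality you are left with an inequality of the schematic form $\eta^{2\alpha}|D^2u^\epsilon ADu^\epsilon|^2\le C|Du^\epsilon|^4+C$, which gives no bound on $|Du^\epsilon|$ at all. The missing ingredient (and the key device in the paper's proof) is to add the term $\beta z^\epsilon=\tfrac{\beta}{2}(u^\epsilon)^2$ to the test quantity, i.e.\ to maximize $w^\epsilon=\phi^2\,\tfrac12|Du^\epsilon|^2+\tfrac{\beta}{2}(u^\epsilon)^2$: using the equation once more, $L_\epsilon z^\epsilon$ produces the good term $2\langle Du^\epsilon,ADu^\epsilon\rangle^2\ge \tfrac{2}{L^2}|Du^\epsilon|^4$, and choosing $\beta$ large (legitimate because Step 1 bounds $\|u^\epsilon\|_{L^\infty}$) absorbs every quartic error and yields $|Du^\epsilon|^4\phi^4\le C(\beta)$ at the maximum point. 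Also note that your ``main obstacle'' is misidentified: once the equation is written in the non-divergence form \eqref{e2.x2} there are no $\epsilon^{-1}$ terms to worry about; the obstacle is the degeneracy just described. Finally, the proposed shortcut of citing Lieberman's interior gradient estimates does not obviously work here, because the structure constants of the functional $\int e^{H_A/\epsilon}$ blow up as $\epsilon\downarrow 0$, whereas the constant in \eqref{gradient_bound} must be independent of $\epsilon$ --- this uniformity is precisely what the theorem is needed for in the compactness argument of Theorem \ref{approximation}.
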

\begin{proof}
The estimate (\ref{max_est}) follows from the standard maximum principle of the equation \eqref{e2.x2}.
For (\ref{gradient_bound}), we proceed as follows.
To simplify the presentation,  we will use the Einstein summation convention.
Denote
$u^\eps_{i }=\frac{\partial}{\partial x_i}u^\ez$, $u^\eps_{ij }=\frac{\partial^2}{\partial x_i\partial x_j}u^\ez$,
$a^{ij}$ as the $(i,j)^{\rm{th}}$-entry of $A$, and $a_k^{ij}= \frac{\partial}{\partial x_k}a^{ij}$.
Recall that
$$\ca_H [u^\eps]=  2a^{ik}u_k^\ez  u^\eps_{ij} a^{j\ell}u_\ell^\ez+ a_k^{ij}u^\ez_iu^\ez_j a^{k\ell}u_\ell^\ez.$$
Taking $\frac{\partial}{\partial s}$ of the equation \eqref{e2.x2}, we obtain
\begin{eqnarray}\label{diff_aron}
&&2a^{ik}u_k^\ez  u^\eps_{ijs} a^{j\ell}u_\ell^\ez+ 4a_s^{ik}u_k^\ez  u^\eps_{ij} a^{j\ell}u_\ell^\ez+
4a^{ik}u_{ks}^\ez  u^\eps_{ij} a^{j\ell}u_\ell^\ez
+ a_{ks}^{ij}u^\ez_iu^\ez_j a^{k\ell}u_\ell^\ez+2a_k^{ij}u^\ez_{is}u^\ez_j a^{k\ell}u_\ell^\ez\nonumber\\
&&\qquad\qquad\qquad\ \ \ +a_k^{ij}u^\ez_iu^\ez_j a_s^{k\ell}u_\ell^\ez
+a_k^{ij}u^\ez_iu^\ez_j a^{k\ell}u_{\ell s}^\ez
+\eps {\,\rm div} (A D u^{\ez}_s)+\ez{\,\rm div} (A_s D u^{\ez} ) =0.
\end{eqnarray}
Set
\begin{equation}\label{G-ep}
G_{m}^\epsilon:=4a^{im}    u^\eps_{ij} a^{j\ell}u_\ell^\ez
+2a_k^{mj} u^\ez_j a^{k\ell}u_\ell^\ez+a_k^{ij}u^\ez_iu^\ez_j a^{km},
\end{equation}
and
\begin{equation}\label{F-ep}
 F_s^\ez:= 4a_s^{ik}u_k^\ez  u^\eps_{ij} a^{j\ell}u_\ell^\ez+a_k^{ij}u^\ez_iu^\ez_j a_s^{k\ell}u_\ell^\ez+a_{ks}^{ij}u^\ez_iu^\ez_j a^{k\ell}u_\ell^\ez+\eps {\,\rm div} (A_s D u^{\ez}).
\end{equation}
Define the operator $L_\eps$ by
\begin{equation}\label{L-ep}
 L _\ez v:= 2a^{ik}u_k^\ez  v_{ij} a^{j\ell}u_\ell^\ez+ \sum_{m=1}^nG_m^\epsilon v_m
+\eps {\,\rm div} (A D v).
\end{equation}
Then (\ref{diff_aron}) can be written as
 \begin{equation}\label{diff_aron1}
-L _\ez (u^\ez_s)= F_s^\epsilon.
\end{equation}
Set $v^\ez:=\frac12|D u^\ez |^2. $
Then
$$v^\epsilon_i= \sum_{s=1}^n u^\epsilon_s u^\epsilon_{si}\ {\rm{and}}\  v^\ez_{ij}= \sum_{s=1}^n \big[u^\ez_{si}u^\ez_{sj}+  u^\ez_{sij}u^\ez_{s}\big],  $$
so that by using the equation (\ref{diff_aron1}) we have
 \begin{align}\label{diff_aron2}
L _\ez v^\ez&= \sum_{s=1}^n\big[ 2a^{ik}u_k^\ez  u^\ez_{si}u^\ez_{sj} a^{j\ell}u_\ell^\ez
+u_s^\epsilon L_\epsilon u_s^\epsilon+ \eps   a^{ij}u^\ez_{si}u^\ez_{sj} \big]
   \nonumber\\
&= 2 |D^2u^\ez ADu^\ez|^2+ \sum_{s=1}^n\big[\eps a^{ij}u^\ez_{si}u^\ez_{sj}
-u^\ez_s F_s^\ez \big ].
\end{align}
Set $z^\ez:=\frac12(u^\ez)^2$. Then by the equation (\ref{e2.x2})  we have
\begin{eqnarray}L _\ez z^\ez&&= 2a^{ik}u_k^\ez  u^\ez_{ ij} u^\ez a^{j\ell}u_\ell^\ez+2a^{ik}u_k^\ez  u^\ez_{ i } u^\ez_j a^{j\ell}u_\ell^\ez\nonumber
 +\sum_{m=1}^nG_m^\ez u^\ez_{ m}u^\ez
 +\ez u^\ez {\rm\, div}(A Du^\ez) +\ez a^{ij}u^\ez_{ i}u^\ez_{ j}\nonumber\\
  &&=2\langle Du^\ez,ADu^\ez\rangle^2+\ez\langle A Du^\ez, Du^\ez\rangle  + u^\ez  \ca_H ^\ez [u^\ez]\nonumber\\
  &&\qquad + 4u^\ez a^{im}   u_m^\ez  u^\eps_{ij} a^{j\ell}u_\ell^\ez
+2u^\ez a_k^{mj} u^\ez_j a^{k\ell}u_\ell^\ez u_m^\ez \nonumber \\
&&=  2\langle Du^\ez,ADu^\ez\rangle^2+\ez \langle A Du^\ez, Du^\ez\rangle\nonumber \\
&&\qquad+  4u^\ez \langle ADu^\ez,D^2u^\ez ADu^\ez\rangle+2u^\ez \langle \langle Du^\ez, DADu^\ez \rangle, ADu^\ez \rangle, \nonumber \label{diff_aron3}
 \end{eqnarray}
where $\langle Du^\ez, DADu^\ez \rangle$ is interpreted as the vector $(\langle Du^\ez, A_kDu^\ez\rangle)_k$ with $A_k$ being the element-wise derivative of $A$.
Choose $\phi\in C^\infty_0(\Omega)$ such that
$$\phi=1 \ {\rm{in}}\ V, \ 0\le \phi \le 1,$$
and, for $\beta>0$ to be determined later, define the auxiliary function $w^\ez$ by
$$w^\ez:=\phi^2 v^\ez+\bz z^\ez.$$
If $w^\ez$ attains its maximum on $\partial\boz$, then
 $$\sup_{\overline V} v^\epsilon\le\sup_{\overline V} w^\ez(x)\le \max_{\overline\Omega} w^\epsilon=\max_{\partial\Omega} w^\epsilon=\frac{\beta}2\max_{\partial\boz} u ^2,$$
hence (\ref{gradient_bound}) holds.
Thus we may assume $w^\ez$ attains its maximum
at an interior point $x_0\in \boz$.
This gives
$$D w^\ez(x_0)=0, D^2w^\ez(x_0)\le 0,$$
so that
\begin{equation}\label{w}
-L_\ez w^\ez(x_0)=  -(2a^{ik}u_k^\ez    a^{j\ell}u_\ell^\ez+\ez a^{ij})w^\ez_{ij}\Big|_{x=x_0}\ge0.
\end{equation}
On the other hand, from (\ref{diff_aron2}) and (\ref{diff_aron3}) we have that, at $x=x_0$,
\begin{eqnarray*}
0 &&\le-L_\ez w^\ez(x^0)= -L_\ez (\phi^2v^\ez)-\beta L_\ez z^\ez\\
&&= -\phi^2L_\ez v^\ez-\beta L_\ez z^\ez- v^\ez L_\ez\phi^2-8\phi a^{ ik}u^\ez_k a^{j\ell}u^\ez_\ell\phi_i \sum_{r=1}^nu^\ez_{rj}u^\ez_r-
4\ez\phi\sum_{m=1}^n\phi_i a^{ij} u^\ez_{mj}u^\ez_m\\
&&=\left[-2 \phi^2|D^2u^\ez ADu^\ez|^2-\ez\phi^2\sum_{s=1}^n a^{ij}u^\ez_{si}u^\ez_{sj}-2\bz\langle Du^\ez,ADu^\ez\rangle^2-\ez\bz \langle Du^\ez,ADu^\ez\rangle\right] \\
&&\quad-\left[4\bz  u^\ez \langle ADu^\ez,D^2u^\ez ADu^\ez\rangle
+2\bz u^\ez a_k^{mj} u^\ez_ju_m^\ez a^{k\ell}u_\ell^\ez \right]  \\
&&\quad-\left[8\phi a^{ ik}u^\ez_k a^{j\ell}u^\ez_\ell\phi_i \sum_{r=1}^nu^\ez_{rj}u^\ez_r+4\ez\phi \sum_{m=1}^n\phi_i a^{ij} u^\ez_{mj}u^\ez_m\right] +\phi^2 
\sum_{s=1}^n u^\ez_sF_{s} - v^\ez L_\ez(\phi^2)\\
&&=I_1+I_2+I_3+I_4+I_5.
\end{eqnarray*}

We estimate $I_1,\cdots, I_5$ as follows.
Since ${\langle \xi,A\xi\rangle}\ge\frac1L|\xi|^2$ {for all $\xi \in \mathbb R^n$}, we have
     \begin{eqnarray*}I_1&&=-2 \phi^2|D^2u^\ez ADu^\ez|^2-\ez\phi^2\sum_{s=1}^n a^{ij}u^\ez_{si}u^\ez_{sj}-2\bz{\langle Du^\ez,ADu^\ez\rangle}^2-\ez\bz \langle A Du^\ez, Du^\ez\rangle\\
     &&\le -2 \phi^2|D^2u^\ez ADu^\ez|^2-\frac{\ez}{L}\phi^2|D^2u^\ez|^2-\frac{2\beta}{L^2}|Du^\ez|^4.
        \end{eqnarray*}
Applying Young's inequality, we can  estimate $I_2$ by
\begin{eqnarray*}
I_2&&= -  4\bz  u^\ez {\langle ADu^\ez,D^2u^\ez ADu^\ez\rangle}
-2\bz u^\ez a_k^{mj} u^\ez_ju_m^\ez a^{k\ell}u_\ell^\ez   \\
&&\le 4\bz |u^\ez| |ADu^\epsilon||D^2u^\ez A Du^\ez|
+ C|Du^\ez|^3\\
&&\le   \bz^{4/3}|D^2u^\ez ADu^\ez|^{4/3} + C |Du^\ez|^4+ C(\bz),
\end{eqnarray*}
 where we have used (\ref{max_est}). {Henceforth} $C>0$ denotes constants depending only on
$n$, {$L$}, $\|A\|_{C^{1,1}(\Omega)}$,
$\displaystyle\|u\|_{C(\overline\Omega)}$, and dist($V,\partial\Omega$).

Similarly,  by Young's inequality we have
 \begin{eqnarray*}
 I_3
 &&=-8\phi a^{ ik}u^\ez_k a^{j\ell}u^\ez_\ell\phi_i {\sum_{r=1}^n}u^\ez_{rj}u^\ez_r- 4\ez\phi {\sum_{m=1}^n}\phi_i a^{ij} u^\ez_{mj}u^\ez_m\\
   &&\le {8}  \phi {\langle AD\phi,Du^\ez\rangle}\cdot {\langle Du^\ez,D^2u^\ez ADu^\ez\rangle} + {4}\ez{\langle AD^2u^\ez Du^\ez, D\phi\rangle}\phi\\
 &&\le C|D^2 u^\ez AD u^\ez ||Du^\ez|^2\phi+ C\ez |D^2 u^\ez D u^\ez |\phi\\
  &&\le \frac18 |D^2 u^\ez ADu^\ez|^2\phi^2+ \frac{\ez}{16 L}|D^2 u^\ez|^2\phi^2+  C|Du^\ez|^4+C.
  \end{eqnarray*}

For $I_4$, {by using $0<\eps \le 1$}, we have
\begin{eqnarray*}
I_4&=& {\sum_{s=1}^n} \left[4\phi^2  u^\ez_sa^{ik}_su_k^\ez  u^\eps_{ij} a^{j\ell}u_\ell^\ez+\phi^2  u^\ez_sa_k^{ij}u^\ez_iu^\ez_j a^{k\ell}_s u_\ell^\ez\right.\\
&&\qquad+\left.\phi^2  u^\ez_s a^{ij}_{sr}u^\ez_iu^\ez_j a^{k\ell}u_\ell^\ez
+\eps\phi^2  u^\ez_s {\,\rm div} (A_s D u^{\ez})\right]\\
&&\le \frac18|D^2u^\ez ADu^\ez|^2\phi^2+ C|Du^\ez|^4+ \frac{\ez}{16L} \phi^2  |D^2u^\ez|^2 +C.
 \end{eqnarray*}
 Finally, for $I_5$, we have
   \begin{eqnarray*}
  &&I_5=
   2  v^\ez a^{ik}u_k^\ez  (\phi^2)_{ij} a^{j\ell}u_\ell^\ez+4  v^\ez a^{ik}(\phi^2)_{k }   u^\eps_{ij} a^{j\ell}u_\ell^\ez
+2  v^\ez a_k^{ij}(\phi^2)_{i }u^\ez_j a^{k\ell}u_\ell^\ez
\\
&&\qquad\qquad+ v^\ez a_k^{ij}u^\ez_iu^\ez_j a^{k\ell}(\phi^2)_{\ell  }
+\ez v^\ez{\rm\,div} (AD\phi^2) \\
&&\le C|Du^\ez|^4+C |D^2u^\ez ADu^\ez||Du^\ez|^2\phi+ C\ez|Du^\ez|^2\\
&&\le \frac18  |D^2u^\ez A Du^\ez|^2\phi^2+C|Du^\ez|^4+C.
    \end{eqnarray*}
Combining all these estimates with~\eqref{w} yields that, at $x=x_0$,
       \begin{eqnarray*}
    &&2 \phi^2|D^2u^\ez ADu^\ez|^2+\frac{\ez}L\phi^2|D^2u^\ez|^2+\frac2{L^2}\bz|Du^\ez|^4\\
&&\le |D^2u^\ez ADu^\ez|^2\phi^2+ C|Du^\ez|^4+ C\bz^{4/3} |D^2u^\ez ADu^\ez|^{4/3}+\frac{\ez}{8 L}
\phi^2|D^2 u^\ez|^2+C(\bz),
        \end{eqnarray*}
so that
\begin{eqnarray*}
  |D^2u^\ez ADu^\ez|^2\phi^2+ \frac2{L^2}\bz|Du^\ez|^4\le  C|Du^\ez|^4+ C\bz^{4/3}|D^2u^\ez ADu^\ez|^{4/3}+C(\bz).
 \end{eqnarray*}
We may choose $\beta>1$ sufficiently large so that
  \begin{eqnarray*}
|D^2u^\ez ADu^\ez|^2\phi^2 +   \frac{\bz}
{L^2}| Du^\ez|^4\le   C\bz^{4/3} |D^2u^\ez ADu^\ez|^{4/3}+C(\bz).
 \end{eqnarray*}
Multiplying both sides of this inequality by $\phi^4$ and applying Young's inequality
implies
\begin{eqnarray*}
|D^2u^\ez ADu^\ez|^2\phi^6 + \frac{\bz}{L^2}| Du^\ez|^4\phi^4&&\le   C\bz^{4/3} |D^2u^\ez ADu^\ez|^{4/3}\phi^4+C(\bz)\\
&&\le \frac12 |D^2u^\ez ADu^\ez|^2\phi^6+C(\bz).
\end{eqnarray*}
Hence we have
         \begin{eqnarray*}
      | Du^\ez|^4\phi^4\Big|_{x=x_0}\le    C.
        \end{eqnarray*}
This finishes the proof{, since $v^\ez=\frac12 |Du^\ez|^2$ attains its maximum at $x^0$}.
\end{proof}

Next we will establish the boundary H\"older continuity estimate of $u^\epsilon$.

\begin{thm}\label{l2.3} With the same notations of Theorem \ref{first_lemma},
assume that in addition $L<2^{1/4}$.
Then there exist $\delta_0>0$, $\epsilon_0>0$, 
$\gamma\in (0,1)$, and $C>0$ depending only on $\Omega$ and $\displaystyle \|A\|_{L^\infty(\Omega)}$
such that if $\| DA\|_{L^\infty(\boz)}\le \delta_0$ and $0<\ez<\epsilon_0$,
then 
\begin{equation}\label{e2.2}|u^\ez(x)-u(y_0)|\le C |x-y_0|^{\gamma}, \ y_0 \in\partial\boz, \ x\in\boz.
\end{equation}

\end{thm}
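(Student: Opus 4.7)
The plan is to construct explicit barrier functions $\psi^\pm(x) = u(y_0) \pm C|x-y_0|^\gamma$ and apply the comparison principle for the $\epsilon$-regularized equation \eqref{e2.x2}. Since $u\in C^{0,1}(\Omega)\cap C(\overline\Omega)$ extends to a Lipschitz function on $\overline\Omega$, the constant $C$ can be chosen (in terms of $\|u\|_{C^{0,1}(\overline\Omega)}$ and $\mathrm{diam}\,\Omega$) so that $\psi^-\le u\le \psi^+$ on $\partial\Omega$ for every $y_0\in\partial\Omega$; the estimate \eqref{e2.2} then reduces to showing that $\psi^+$ is a classical supersolution and $\psi^-$ is a classical subsolution of \eqref{e2.x2} in $\Omega$.

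To verify the supersolution property of $\psi^+$, write $r=|x-y_0|$ and $\nu=(x-y_0)/r$. A direct computation from $D\psi^+=C\gamma r^{\gamma-1}\nu$ and $D^2\psi^+=C\gamma r^{\gamma-2}[I+(\gamma-2)\nu\otimes\nu]$ gives
\begin{equation*}
\mathcal{A}_{H_A}[\psi^+]=2C^3\gamma^3 r^{3\gamma-4}\bigl[|A\nu|^2-(2-\gamma)\langle A\nu,\nu\rangle^2\bigr]+C^3\gamma^3 r^{3\gamma-3}\langle DA(\nu,\nu),A\nu\rangle.
\end{equation*}
Setting $\alpha=\langle A\nu,\nu\rangle$, the ellipticity bounds $\alpha\ge L^{-1}$ and $|A\nu|^2\le L^2$ yield $(2-\gamma)\alpha^2-|A\nu|^2\ge L^{-2}(2-\gamma-L^4)$. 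Under the hypothesis $L<2^{1/4}$ (equivalently $L^4<2$), one can choose $\gamma\in (0,1)$ with $\gamma<2-L^4$, which makes this bracket strictly positive with a quantitative lower bound $c_0=c_0(L)>0$. Hence the leading term of $\mathcal{A}_{H_A}[\psi^+]$ is at most $-2c_0 C^3\gamma^3 r^{3\gamma-4}$, while the perturbative first-derivative term is $O(\|DA\|_{L^\infty(\Omega)})C^3 r^{3\gamma-3}$ and the $\epsilon$-correction $\epsilon\,\mathrm{div}(AD\psi^+)$ is $O(\epsilon)\,C r^{\gamma-2}$. Since $3\gamma-4<3\gamma-3$ and $3\gamma-4<\gamma-2$, both perturbations carry strictly smaller powers of $r$ on $0<r\le\mathrm{diam}\,\Omega$ and can be absorbed into the leading term once $\|DA\|_{L^\infty(\Omega)}\le\delta_0$ and $0<\epsilon\le\epsilon_0$ are chosen small in terms of $\Omega$ and $\|A\|_{L^\infty(\Omega)}$. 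This gives $-\mathcal{A}_{H_A}[\psi^+]-\epsilon\,\mathrm{div}(AD\psi^+)> 0$ in $\Omega$; combined with $\psi^+\ge u^\epsilon$ on $\partial\Omega$, a standard comparison argument for the uniformly elliptic quasilinear equation \eqref{e2.x2} (testing at an interior minimum of $\psi^+-u^\epsilon$ and using the monotonicity in $D^2u$ of both the Aronsson and divergence terms) yields $u^\epsilon\le\psi^+$ in $\Omega$. The symmetric argument with $\psi^-$ gives the matching lower bound, and together they imply \eqref{e2.2}.

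The main obstacle is the sign analysis of the leading-order Aronsson contribution to $\mathcal{A}_{H_A}[\psi^+]$: strict negativity of the bracket $|A\nu|^2-(2-\gamma)\langle A\nu,\nu\rangle^2$ holds uniformly across $\Omega$ precisely when $L^4<2$, which pins down the ellipticity threshold $L<2^{1/4}$ in the hypothesis. The smallness assumption on $\|DA\|_{L^\infty(\Omega)}$ plays a purely secondary role, being used only to absorb the first-derivative term of the Aronsson operator into the dominant third-order contribution, and likewise $\epsilon_0$ serves only to absorb the linear regularization term.
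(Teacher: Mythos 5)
Your proposal is correct and follows essentially the same route as the paper: the paper also uses the radial barrier $w(x)=\lambda|x-y_0|^\gamma$, isolates the leading term $\lambda^3\gamma^3\big(\tfrac{2-\gamma}{L^2}-L^2\big)|x-y_0|^{3\gamma-4}$ (your bracket $(2-\gamma)\langle A\nu,\nu\rangle^2-|A\nu|^2$ in disguise), picks $\gamma\in(0,1)$ with $\gamma<2-L^4$ using $L<2^{1/4}$, absorbs the $\|DA\|$ and $\epsilon$ terms via their higher powers of $r$ on the bounded domain, and concludes by comparison. The only cosmetic difference is that the paper makes the choice of $\delta_0$ explicit as $\tfrac{\min_{\overline\Omega}\widetilde\gamma/(2|x|)}{\|A\|_{L^\infty(\overline\Omega)}}$, whereas you leave it implicit in the absorption step.
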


\begin{proof} 
To show \eqref{e2.2}, assume for simplicity that $y_0=0\in\partial\Omega$. Define
$w(x)=\lz |x|^\gamma${, where $\lz>1$ is chosen such that}
$$-w+u(0)\le u\le u(0)+w \ {\rm{on}}\ \partial \boz.$$ {This is always possible, since $u$ is Lipschitz.}
Now we claim that $w$ is a supersolution of the {$\ez$-regularized} equation (\ref{e2.x2}).
In fact, direct calculations imply
\begin{eqnarray*}
-a^{ik}(x)w_k(x)w_{ij}(x)a^{j\ell}(x)w_\ell(x)&&=-\frac{\lz^2\gamma^2 a^{ik}x_ka^{j\ell}x_\ell}{|x|^{4-2\gamma}}\cdot\lz\gamma\lf[(\gamma-2)\frac{x_ix_j}{|x|^{4-\gamma}}+\frac{\dz_{ij}}{|x|^{2-\gamma}}\r]\\
&&=\lz^3\gamma^3(2-\gamma)\frac{ \langle x, Ax\rangle^2 }{|x|^{8-3\gamma}}
- \lz^3\gamma^3\frac{ \langle x, A^2x\rangle}{|x|^{6-3\gamma}}\\
&&\ge  \lz^3\gamma^3\frac{2-\gamma}{L^2}|x|^{3\gamma-4}  - \lz^3\gamma^3L^2|x|^{3\gamma-4}\\
&&=   \lz^3\gamma^3\left(\frac{2-\gamma}{L^2}-L^2\right)|x|^{3\gamma-4}.
                \end{eqnarray*}
Note that we can choose $\gamma>0$ so that $\wz\gamma:=\frac{2-\gamma}{L^2}-L^2>0$,
since $L<2^{\frac14}$.
                Next we estimate
                \begin{eqnarray*}
                -a_k^{ij}(x)w_i(x)w_j(x) a^{k\ell}(x)w_\ell(x)&&=-\lz^3\gamma^3a_k^{ij}(x)a^{k\ell}(x)\frac{x_ix_jx_\ell}{|x|^{6-3\gamma}} \\
                &&\ge -\lz^3\gamma^3\|A\|_{L^\infty(\overline \Omega)}\|DA\|_{L^\infty(\overline \Omega)}
|x|^{3\gamma-3}
                \end{eqnarray*}
                Finally,  {for the regularization term} we can estimate
                   \begin{eqnarray*}-\ez  {\rm\, div}(A Dw)(x)&&=- \ez\lz  a^{ij}\gamma\frac{\dz_{ij}}{|x|^{{2-\gamma}}}- \ez\lz  a^{ij}\gamma(\gamma-2)\frac{x_ix_j }
                   {|x|^{{4-\gamma}}}
                   - \ez\lz\gamma a_{j}^{ij}\frac {x_i}{|x|^{2- \gamma}} \\
                   &&\ge - \ez \lz L\gamma(n+ {\gamma-2})|x|^{\gamma-2}-2\ez\lz n \gamma\|DA\|_{L^\fz(\overline\boz)}|x|^{\gamma-1}.
                         \end{eqnarray*}
 Putting these estimates together, we have
 \begin{eqnarray*}
&&-\mathcal A_H^\epsilon[w]\\
&&\ge2\lz^3\gamma^3\wz\gamma|x|^{3\gamma-4}-\lz^3\gamma^3\|A\|_{L^\infty(\Omega)}\| DA\|_{L^\infty(\Omega)}|x|^{3\gamma-3}
-2\ez \lz L\gamma(n+\gamma-2)|x|^{\gamma-2}\\
&&\qquad-2\ez\lz n \gamma\|DA\|_{L^\fz(\boz)}|x|^{\gamma-1}\\
&&\ge2\lz^3\gamma^3\wz\gamma|x|^{3\gamma-4}-\lz^3\gamma^3\|A\|_{L^\infty(\Omega)}\|DA\|_{L^\infty(\Omega)}|x|^{3\gamma-3}
-C\epsilon|x|^{3\gamma-4}.
                \end{eqnarray*}
Set
\[
\delta_0:=\dz(\boz, A)=\frac{\min_{x\in\overline\boz}\frac{\wz \gamma}{2|x| }}{\|A\|_{L^\infty(\overline \Omega)}}.
\]
If $\displaystyle \|  DA\|_{L^\infty(\boz)}\le \dz_0$ and $\epsilon_0>0$ is sufficiently small, then
we  have $\gamma \in (0,1)$ that
$$ -\ca_H^\epsilon [w]\ge0.$$
By the comparison principle, we conclude that $w+u(0)\ge u^\ez$ in $\boz$.
Similarly, we have  $-w+u(0)\ge u^\ez$ in $\boz$.
Thus we obtain
$$|u^\ez(x)-u(0)|\le \lz |x|^{\gz},\ \ x \in \Omega. $$
This completes the proof.
\end{proof}

\subsection{Flatness estimates}

In this section, we will prove refined { a priori} estimates of the $\epsilon$-{regularized} equation (\ref{e2.x2}) {under a flatness assumption}.
Assume $u^\ez\in C^\infty(\Omega)\cap C(\overline\Omega)$ is a smooth solution to the $\epsilon$-{regularized} equation~\eqref{e2.x2} associated with  
$A\in\mathscr A(\Omega)\cap C^{\infty}(\Omega)$.

\begin{thm}\label{l2.4} Assume $B(0,3)\subset\Omega$. For any $0<\lambda<1$,
if $A\in\mathscr A(\Omega)\cap C^{\infty}(\Omega)$ satisfies
$A(0)=I_n$ and
 \begin{equation}\label{lambda_assumption}
 \| DA\|_{L^\infty(B(0,\,3))}+\| D^2A\|_{{L^{\infty}(B(0,\,3))}}\le\lz,
 \end{equation}
and if $u^\epsilon\in C^{\infty}(\Omega)$ is a smooth solution of (\ref{e2.x2}) that satisfies
\begin{equation}\label{flatness_assumption}\max_{x\in B(0,2)}|u^\ez(x)-x_n|\le\lz,
\end{equation}
then there exists a constant $C>0$ independent of $\ez $ and $\lz$ such that
\begin{equation}
|Du^\ez(x)|^2\le u^\ez_{n}(x)+C\lz^{1/2} \quad {\text{for all}}\ x\in B(0,1).\label{flat}
\end{equation}
\end{thm}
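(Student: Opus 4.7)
My plan is to adapt the auxiliary-function and maximum-principle strategy of Theorem 3.1 to the quantity
$$\Phi^\epsilon := |Du^\epsilon|^2 - u^\epsilon_n,$$
which vanishes identically at the affine model $u = x_n$ when $A \equiv I_n$, and should therefore be controlled by a modulus depending on $\lambda$ under the two smallness assumptions (\ref{lambda_assumption}) and (\ref{flatness_assumption}).

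First I would compute the action of the linearized operator $L_\epsilon$ on $\Phi^\epsilon$. Combining identity (3.10) for $v^\epsilon = \tfrac12|Du^\epsilon|^2$ with the differentiated Aronsson equation (3.9) applied at $s = n$, namely $L_\epsilon u^\epsilon_n = -F^\epsilon_n$, gives
$$L_\epsilon \Phi^\epsilon = 4|D^2u^\epsilon ADu^\epsilon|^2 + 2\epsilon\sum_s a^{ij} u^\epsilon_{si}u^\epsilon_{sj} - 2\sum_s u^\epsilon_s F^\epsilon_s + F^\epsilon_n.$$
Every summand in $-2\sum_s u^\epsilon_s F^\epsilon_s + F^\epsilon_n$ carries at least one factor of $DA$ or $D^2A$, hence is bounded by $C\lambda$ times a polynomial in $|Du^\epsilon|$ and $|D^2u^\epsilon|$. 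Using the Lipschitz bound from Theorem 3.1 to control $|Du^\epsilon|$, and Young's inequality to absorb the $|D^2u^\epsilon|$-factors into the two non-negative source terms on the right, I expect to arrive at the clean lower bound
$$L_\epsilon \Phi^\epsilon \ge 2|D^2u^\epsilon ADu^\epsilon|^2 + \epsilon \sum_s a^{ij}u^\epsilon_{si}u^\epsilon_{sj} - C\lambda \quad \text{on } B(0, 3/2).$$

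Next, mirroring the auxiliary function $w^\epsilon = \phi^2 v^\epsilon + \beta z^\epsilon$ used in the proof of Theorem 3.1, I would take a cutoff $\phi \in C^\infty_c(B(0, 3/2))$ with $\phi \equiv 1$ on $B(0,1)$ and consider
$$W^\epsilon := \phi^2 \Phi^\epsilon + \alpha(u^\epsilon - x_n)^2,$$
for a parameter $\alpha>0$ to be chosen of order $\lambda^{-1/2}$. The flatness-calibrated penalty $\alpha(u^\epsilon - x_n)^2$ plays exactly the role that $\beta z^\epsilon$ plays in Theorem 3.1: combined with (\ref{flatness_assumption}) it keeps $W^\epsilon$ comparable to $\alpha\lambda^2 = \lambda^{3/2}$ on $\partial B(0, 3/2)$, and under $L_\epsilon$ it produces a coercive first-derivative term $\alpha\langle AD(u^\epsilon - x_n), D(u^\epsilon - x_n)\rangle$ that absorbs the cross terms arising from differentiating $\phi^2$ in the expansion of $L_\epsilon(\phi^2\Phi^\epsilon)$. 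If the maximum of $W^\epsilon$ over $\overline{B(0,3/2)}$ is attained at the boundary, then $\Phi^\epsilon \le C\lambda^{3/2}$ on $B(0,1)$, stronger than what is required. Otherwise the maximum is attained at an interior point $x_0$, where $DW^\epsilon(x_0) = 0$ and $D^2W^\epsilon(x_0) \le 0$, and hence $L_\epsilon W^\epsilon(x_0) \le 0$.

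At $x_0$ I would insert the lower bound from Step 1 into $L_\epsilon W^\epsilon(x_0) \le 0$, deal with the $\phi^2$-cross terms by the same Young-inequality bookkeeping as in Theorem 3.1, and use $|u^\epsilon - x_n| \le \lambda$ to bound the lower-order contributions from $L_\epsilon((u^\epsilon - x_n)^2)$. With $\alpha \sim \lambda^{-1/2}$, the result should be an inequality forcing $\Phi^\epsilon(x_0) \le C\lambda^{1/2}$, which then transfers to $B(0,1)$ since $\phi\equiv 1$ there. The principal obstacle, and the reason for the sharp exponent $1/2$ rather than the naive $1$, is precisely this parameter balancing: the first-order identity $DW^\epsilon(x_0)=0$ has to be played against the coercive source $|D^2u^\epsilon ADu^\epsilon|^2$ via an AM--GM optimization that produces the square-root scaling, and all constants have to remain independent of $\epsilon$ in the presence of the variable matrix $A$ and the $\epsilon$-divergence regularization term in $F^\epsilon_s$. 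Making this bookkeeping completely precise, while simultaneously keeping track of the coupling between the $\phi^2\Phi^\epsilon$ and $\alpha(u^\epsilon - x_n)^2$ pieces, is the most delicate step.
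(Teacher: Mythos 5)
Your overall architecture --- a Bernstein-type maximum-principle argument on an auxiliary function combining the excess $|Du^\ez|^2-u^\ez_n$, the flatness penalty $(u^\ez-x_n)^2$, and a cutoff, run through the linearized operator $L_\ez$ with the sources $F^\ez_s$ --- is the right one, and your Step 1 identity for $L_\ez(|Du^\ez|^2-u^\ez_n)$ is correct. But two choices in the auxiliary function create genuine gaps. First, you work with the excess to the first power, $\Phi^\ez=|Du^\ez|^2-u^\ez_n$, whereas the argument needs the \emph{squared} positive part $\Phi(Du^\ez)=(|Du^\ez|^2-u^\ez_n)_+^2$. The reason is the cutoff cross term $2\sigma^{ij}(\phi^2)_i\Phi^\ez_j$ with $\sigma^{ij}=2a^{ik}u^\ez_ka^{j\ell}u^\ez_\ell+\ez a^{ij}$: since $D\Phi^\ez$ is linear in $D^2u^\ez$ with no small prefactor, this term is of size $C\phi|D^2u^\ez ADu^\ez|$, and absorbing it into $\phi^2|D^2u^\ez ADu^\ez|^2$ by Young leaves an error of size $C$ --- a constant, not $C\lz$ and not $C(\Phi^\ez)^2$. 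Your interior-maximum inequality then reads $4\az\,\Phi^\ez(x_0)^2\le C+\dots$, which with $\az\sim\lz^{-1/2}$ gives only $\Phi^\ez(x_0)\le C\lz^{1/4}$; the claimed $\lz^{1/2}$ does not come out of your balancing. With the squared quantity, $D\Phi(Du^\ez)$ carries the factor $(|Du^\ez|^2-u^\ez_n)$, the cross terms are absorbed into the square of $2\langle Du^\ez,D^2u^\ez ADu^\ez\rangle-\langle(D^2u^\ez)^n,ADu^\ez\rangle$ with leftover $C(|Du^\ez|^2-u^\ez_n)^2$, and this quadratic error is dominated by the coercive term $4\bz\big(\langle Du^\ez,ADu^\ez\rangle-a^{nk}u^\ez_k\big)^2\approx4\bz(|Du^\ez|^2-u^\ez_n)^2$ produced by the penalty, with $\bz$ a fixed large constant (not $\lz^{-1/2}$); the conclusion is $(|Du^\ez|^2-u^\ez_n)^2\le C\lz$, whence the exponent $1/2$. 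Note also that the coercivity of the penalty is this zeroth-order square, not the term $\ez\langle A(Du^\ez-e_n),Du^\ez-e_n\rangle$ you invoke, which carries a factor of $\ez$ and cannot absorb $O(1)$ cross terms.

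Second, $L_\ez\big((u^\ez-x_n)^2\big)$ contains the error $-8a^{ik}(u^\ez_k-\dz_{kn})u^\ez_{ij}a^{j\ell}u^\ez_\ell(u^\ez-x_n)$, bounded only by $C\lz|D^2u^\ez ADu^\ez|$; multiplied by your weight $\az$ this gives $C\az\lz|D^2u^\ez ADu^\ez|$, and the only coercive second-order term available in your $W^\ez$, namely $\phi^2|D^2u^\ez ADu^\ez|^2$, degenerates where $\phi$ is small, so this error cannot be absorbed at an interior maximum near the edge of the support of $\phi$. This is exactly why the auxiliary function in the actual proof carries the third summand $\lz|Du^\ez|^2$: it contributes $2\lz|D^2u^\ez ADu^\ez|^2$ globally (no cutoff), which absorbs $C\bz\lz|D^2u^\ez ADu^\ez|$ at the cost of $C\bz^2\lz$, consistent with the final bound $(|Du^\ez|^2-u^\ez_n)^2\le C\lz$. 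Without this term, or some substitute for it, your scheme does not close.
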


\begin{proof}
Set   $\Phi(p):=(|p|^2-p_n)^2_+={\max\{ |p|^2-p_n , 0\}^2}$. 
Let $\phi \in C^\infty_0(B(0,3))$  {be} such that
$$\phi=1\ {\rm{in}}\ B(0,1), \ \phi=0 \ {\rm{outside}}\ B(0,2),\
0\le\phi\le1, \ {\rm{and}}\ |D\phi| \le 2.$$
Define $$v^\ez=\phi^2\Phi(Du^\ez)+\bz(u^\ez-x_n)^2+\lz|Du^\ez|^2.$$
Applying Theorem 3.1, we have
 $$|u^\ez|+|Du^\ez|\le C\ {\rm{in}}\ B(0,2).
$$
 If $\displaystyle\max_{B(0,\,2)}v^\ez$ is attained on $\partial B(0, 2)$, then by (\ref{max_est}),
 (\ref{lambda_assumption}), and (\ref{flatness_assumption}) we have
 $$\max_{B(0,\,2)}v^\ez(x)=\max_{\partial B(0,2)}\big(\beta (u^\epsilon-x_n)^2+\lambda |Du^\epsilon|^2\big)\le \bz\lz^2+C\lz\le C\lz,$$
 {and} hence
 $$\max_{B(0,1)}\big(|Du^\epsilon|^2-u_{x_n}^\epsilon\big){_+}^2
 \le
\max_{B(0,\,1)}\Phi(Du^\ez )\le  C\lz$$
so that (\ref{flat}) holds.
 {Therefore} we may assume  that $v^\ez$ attains its  maximum
at an interior point $x_0\in B(0,2)$.
If $\big(|Du^\ez|^2-u^\ez_n\big)(x_0)\le0$, then $\Phi(Du^\ez)(x_0)=0$ and
  $$\max_{B(0,1)}\Phi(Du^\ez )\le\max_{B(0,1)}v^\ez(x)=v^\ez(x_0)
\le v^\ez(x^0)\le \bz\lz^2+C\lz\le C\lz
  $$
so that (\ref{flat}) also holds.
So we  {can also} assume
$$\big(|Du^\ez|^2-u^\ez_n\big)(x_0)>0.$$

To estimate $v^\ez(x_0)$, let $L_\ez$ and $F_s^\epsilon$ be given by (\ref{L-ep}) and (\ref{F-ep}). {We need to compute $L_\ez v^\ez$ at $x^0$. Using
$$\ca_H [u^\ez]+\ez{\rm div}(A D u^\ez)=2a^{ik}u_k^\ez   u^\ez_{ij} a^{j\ell}u_\ell^\ez+a_k^{ij}u^\ez_iu^\ez_j a^{k\ell}u^\ez_\ell+\ez{\rm div}(A D u^\ez)=    0,$$
we obtain
\begin{eqnarray*}-L_\ez  ((u^\ez-x_n)^2)
&&= -4a^{ik}u_k^\ez   u^\ez_{ij} a^{j\ell}u_\ell^\ez(u^\ez-x_n)
 - 4a^{ik}u_k^\ez   a^{j\ell}u_\ell^\ez(u^\ez_{i }-\dz_{in})(u^\ez_{j }-\dz_{jn})\\
 &&\quad-8a^{ik}  (u^\ez_{k }-\dz_{kn})  u^\eps_{ij} a^{j\ell}u_\ell^\ez(u^\ez-x_n)\\
 &&\quad-4a_k^{ij}(u^\ez_{i }-\dz_{in}) u^\ez_j a^{k\ell}u_\ell^\ez(u^\ez-x_n)\\
 &&\quad-2a_k^{ij}u^\ez_iu^\ez_j a^{k\ell} (u^\ez_{\ell }-\dz_{\ell n})(u^\ez-x_n)\\
 &&\quad-2\eps(u^\ez-x_n) {\,\rm div} (AD u^\ez-ADx_n)-2\eps{\langle D u^\ez -e_n,A(D u^\ez -e_n)\rangle}\\
&&=   - 4 \big({\langle D u^\ez,AD u^\ez\rangle}-a^{nk}u^\ez_k\big)^2- 2\eps{\langle D u^\ez -e_n,A(D u^\ez -e_n)\rangle}  \\
&&\quad -8a^{ik}  (u^\ez_{k }-\dz_{kn})  u^\eps_{ij} a^{j\ell}u_\ell^\ez(u^\ez-x_n)\\
 &&\quad-4a_k^{ij}(u^\ez_{i }-\dz_{in}) u^\ez_j a^{k\ell}u_\ell^\ez(u^\ez-x_n)\\
 &&\quad+2a_k^{ij}u^\ez_iu^\ez_j a^{k\ell}\dz_{\ell n}  (u^\ez-x_n)+2\epsilon {\sum_{i=1}^n}a^{in}_i(u^\ez-x_n) \\
&&=J_1+J_2+J_3+J_4+J_5+J_6,
\end{eqnarray*}
where we denote
$e_n=(0, ..., 0, 1)$.

Applying \eqref{flatness_assumption} and Theorem 3.1, we have {by straightforward calculations} that
\begin{align*}
|J_3|
&\le C\lambda |D^2u^\ez ADu^\ez|,
\end{align*}
and
\begin{align*}
|J_4|, |J_5|&\le  C\lambda,
\end{align*}
as well as
$$
|J_6|\le C\epsilon\lambda.
$$
Since $\|DA\|_{L^\infty}\le \lz$ and $A(0)=I_n$, we have
$|A-I_n|\le C\lambda$ on $\Omega$ and hence
\begin{eqnarray*}
\big|{\langle Du^\ez, ADu^\ez\rangle}-a^{nk}u^\ez_k\big|&\ge& \big||Du^\ez|^2-u^\ez_n\big|-\big|{\langle Du^\ez, (A-I_n)Du^\ez\rangle}\big|\\
&&-\big|a^{nn}-1\big||u_n| -\sum_{k=1}^{n-1}\big|a^{nk}u^\ez_k\big|\\
&&\ge \big||Du^\ez|^2-u^\ez_n\big|-C\lz.
\end{eqnarray*}
Hence we have {that} 
$$J_1=-4\big({\langle Du^\ez, ADu^\ez\rangle}-a^{nk}u_k^\epsilon\big)^2
\le -4\big||Du^\ez|^2-u^\ez_n\big|^2+C\lz.
$$
Since ${\langle \xi, A\xi\rangle}\ge \frac1{L}|\xi|^2$, we {also} have
$$
J_2\le -\frac{\epsilon}{L}\big|Du^\epsilon-e_n\big|^2.
$$
{Combining} all these estimates on $J_i$'s, we have
\begin{eqnarray}\label{flat0} -L_\ez  \big((u^\ez-x_n)^2\big)
&&\le - 4\big(|Du^\ez|^2-u^\ez_n\big)^2  {\color{magenta}-}\frac{2\ez}{L}  |D u^\ez -e_n|^2\nonumber\\
&&\quad+C\lz(1+|D^2u^\ez ADu^\ez|).
\end{eqnarray}
Moreover, similar to the proof of Theorem 3.1, we have
\begin{eqnarray} &&\frac12L_\ez \big(|Du^\ez|^2\big)
=  2 |D^2u^\ez ADu^\ez|^2+ \eps  {\sum_{s=1}^n}\big(a^{ij}u^\ez_{si}u^\ez_{sj}-  u^\ez_s F_s^\ez
\big)
\nonumber\\
&&\ge 2 |D^2u^\ez ADu^\ez|^2+ \frac{\ez}{L}|D^2u^\ez|^2- C|D^2u^\ez ADu^\ez||Du^\ez|^2-C |Du^\ez|^4\nonumber\\
&&\ge  |D^2u^\ez ADu^\ez|^2+ \frac{\ez}{L}|D^2u^\ez|^2-C.
\label{flat1}
\end{eqnarray}
Next we need to estimate $L_\ez(\phi^2 \Phi(Du^\ez))$. First recall
\begin{eqnarray*}
L_\ez(\Phi(Du^\ez))&&=  2a^{ik}u_k^\ez    a^{j\ell}u_\ell^\ez (\Phi(Du^\ez))_{ij }+\ez  {\rm\, div} (A D(\Phi(Du^\ez)))\\
&&+\big(4a^{is}    u^\eps_{ij} a^{j\ell}u_\ell^\ez
+2a_k^{sj} u^\ez_j a^{k\ell}u_\ell^\ez+a_k^{ij}u^\ez_iu^\ez_j a^{ks} \big)(\Phi(Du^\ez))_{s }.
\end{eqnarray*}
{As explained earlier, we may assume} $|Du^\ez|^2>u_n^\ez$ at $x^0\in B(0,2)$. {With this assumption we have at $x=x^0$ that}
$$(\Phi(Du^\ez))_{s }= 2\big(|Du^\ez|^2-u^\ez_n\big)\Big( 2{\sum_{k=1}^n}u^\ez_{ks}u^\ez_k-u^\ez_{ns}\Big),$$
and
\begin{eqnarray*} (\Phi(Du^\ez))_{ij }&&= 2\Big( 2{\sum_{s=1}^n}u^\ez_{sj}u^\ez_s-u^\ez_{nj}\Big)\Big( 2{\sum_{s=1}^n}u^\ez_{si}u^\ez_s-u^\ez_{ni}\Big)\\
&&\quad+
2\big(|Du^\ez|^2-u^\ez_n\big)\Big(2{\sum_{s=1}^n}(u^\ez_{si}u^\ez_{sj}
+u^\ez_{sij}u^\ez_s)-u^\ez_{nij}\Big).
\end{eqnarray*}
Hence we obtain that, at $x=x_0$,
 \begin{equation}\label{phi_du}
 \begin{split}
 L_\ez(\Phi(Du^\ez))&= 4a^{ik}u_k^\ez    a^{j\ell}u_\ell^\ez\Big( 2{\sum_{s=1}^n}u^\ez_{s j}u^\ez_s-u^\ez_{nj}\Big)\Big(2{\sum_{s=1}^n}u^\ez_{si}u^\ez_s-u^\ez_{ni}\Big)\\
&\quad+
4\big(|Du^\ez|^2-u^\ez_n\big) a^{ik}u_k^\ez    a^{j\ell}u_\ell^\ez
\Big( 2{\sum_{s=1}^n}(u^\ez_{si}u^\ez_{sj}+u^\ez_{sij}u^\ez_s)-u^\ez_{nij}\Big)\\
&\quad+2\ez a^{ij}  \Big( 2{\sum_{s=1}^n}u^\ez_{si}u^\ez_s-u^\ez_{ni}\Big)
\Big( 2{\sum_{s=1}^n}u^\ez_{sj}u^\ez_s-u^\ez_{nj}\Big)\\
&\quad+
2\ez\big(|Du^\ez|^2-u^\ez_n\big)a^{ij}
\Big( 2{\sum_{s=1}^n}(u^\ez_{si}u^\ez_{sj}+u^\ez_{sij}u^\ez_s)-u^\ez_{nij}\Big)\\
& \quad+ 2\ez a^{ij}_j\big(|Du^\ez|^2-u^\ez_n\big)\Big( 2{\sum_{s=1}^n}u^\ez_{sj}u^\ez_s-u^\ez_{nj}\Big) \\
&\quad +2\big(|Du^\ez|^2-u^\ez_n\big){\sum_{m=1}^n}G_m^\epsilon \Big(2{\sum_{s=1}^n}u^\ez_{sm}u^\ez_s-u^\ez_{nm}\Big)\\
&=4a^{ik}u_k^\ez    a^{j\ell}u_\ell^\ez\Big(2{\sum_{s=1}^n}u^\ez_{s j}u^\ez_s-u^\ez_{nj}\Big)\Big( 2{\sum_{s=1}^n}u^\ez_{si}u^\ez_s-u^\ez_{ni}\Big)\\
&\quad+8\big(|Du^\ez|^2-u^\ez_n\big) a^{ik}u_k^\ez    a^{j\ell}u_\ell^\ez
\Big( {\sum_{s=1}^n}u^\ez_{si}u^\ez_{sj}\Big)\\
&\quad+2\ez a^{ij}  \Big( 2{\sum_{s=1}^n}u^\ez_{si}u^\ez_s-u^\ez_{ni}\Big)
\Big( 2{\sum_{s=1}^n}u^\ez_{sj}u^\ez_s-u^\ez_{nj}\Big)\\
&\quad+4\ez a^{ij}  \Big(|Du^\epsilon|^2-u^\ez_{n}\Big)
\Big({\sum_{s=1}^n} u^\ez_{sj}u^\ez_{sj}\Big)\\
&\quad+2\big(|Du^\epsilon|^2-u_n^\epsilon\big)\Big( 2{\sum_{s=1}^n}u_s^\epsilon L_\epsilon(u^\epsilon_s)-L_\epsilon(u^\epsilon_n)\Big)\\
&=K_1+K_2+K_3+K_4+K_5.
\end{split}
\end{equation}
Here $G_m^\ez$ is as defined in~\eqref{G-ep}. Now we estimate $K_1, ..., K_5$ separately as follows.
For $K_1$, we have

\begin{align*}
K_1
&=4\Big[2\langle Du^\ez, D^2u^\ez ADu^\ez\rangle-\langle (D^2u^\ez)^n,ADu^\ez\rangle\Big]^2,
\end{align*}
where $(D^2u^\ez)^n$ denotes the $n^{th}$-row of $D^2u^\ez$.
For $K_2$, we have
\[
K_2=8(|Du^\ez|^2-u^\ez_n)|D^2u^\ez ADu^\ez|^2.
\]
For $K_3$, we have
$$
K_3
\ge \frac{2\ez}L{\sum_{i=1}^n} \Big(2{\sum_{s=1}^n}u^\ez_{si}u^\ez_s-u^\ez_{ni}\Big)^2.$$
For $K_4$, we have
\[
 K_4\ge\frac{4\ez}{L} \big(|Du^\ez|^2-u^\ez_n\big) \big|D^2u^\ez\big|^2.
\]
From (\ref{diff_aron1}), we have
$$
K_5=2\big(|Du^\epsilon|^2-u_n^\epsilon\big)\Big(\sum_{s=1}^n 2u_s^\epsilon F_s^\epsilon-F_n^\epsilon\Big),
$$
so that we can apply Theorem 3.1 to estimate
$$
\big|K_5\big|
\le \big(|Du^\epsilon|^2-u_n^\epsilon\big)
\big(C\lz   |D^2u^\ez ADu^\ez|+ \frac{\ez}{4L}|D^2u^\ez|^2 +C\lz\big).
$$
Putting these estimates into~\eqref{phi_du} gives
 \begin{align}\label{le-est1}
 L_\ez(\Phi(Du^\ez)) \ge&\   8\big(|Du^\ez|^2-u^\ez_n\big)
\Big(|D^2 u^\ez AD u^\ez|^2 + \frac\ez{4L} |D^2 u^\ez   |^2\Big)\\
&+ 4\Big[2{\langle Du^\ez, D^2u^\ez ADu^\ez\rangle-\langle (D^2u^\ez)^n,ADu^\ez\rangle}\Big]^2\nonumber\\
&+\frac{2\ez}{L} \sum_{i=1}^n \Big( 2{\sum_{s=1}^n}u^\ez_{si}u^\ez_s-u^\ez_{ni}\Big)^2\nonumber\\
&- C\lz (|Du^\ez|^2-u^\ez_n)  |D^2u^\ez ADu^\ez| -  C\lz.\nonumber
\end{align}
It follows from (\ref{le-est1}) that
 \begin{eqnarray*}
L_\ez\big(\phi^2\Phi(Du^\ez)\big)&=& \phi^2 L_\ez\big(\Phi(Du^\ez)\big)+\Phi(Du^\ez) L_\ez\big(\phi^2\big) \\
&&+4a^{ik}u_k^\ez a^{jl}u_l^\ez \phi\phi_i(\Phi(Du^\ez))_j+2\ez\phi a^{ij}\phi_i(\Phi(Du^\ez))_j \\
&\ge& 8\phi^2\big(|Du^\ez|^2-u^\ez_n\big)\big|D^2 u^\ez AD u^\ez\big|^2
+\Phi\big(Du^\ez\big) L_\ez\big(\phi^2\big)\\
&&+4\phi^2\Big[2{\langle Du^\ez, D^2u^\ez ADu^\ez\rangle-\langle (D^2u^\ez)^n,ADu^\ez\rangle}\Big]^2\\
&&+4a^{ik}u^\ez_ka^{j\ell}u^\ez_\ell\phi\phi_i(\Phi(Du^\ez))_j
+\frac{2\ez}L \phi^2\sum_{i=1}^n \Big(2{\sum_{s=1}^n}u^\ez_{si}u^\ez_s-u^\ez_{ni}\Big)^2\\
&&+2\ez\phi a^{ij}\phi_i(\Phi(Du^\ez))_j - C\lz \phi^2\Big[1+\big(|Du^\ez|^2-u^\ez_n\big)
\big|D^2u^\ez ADu^\ez\big| \Big].
 \end{eqnarray*}
 It is easy to see that
 \begin{eqnarray*}
{|L_\ez \big(\phi^2\big)|}&&=  {\Big|}2a^{ik}u_k^\ez    a^{j\ell}u_\ell^\ez (\phi^2)_{ij }
+\ez {\rm div}(A D  \phi^2)\\
&&\quad+\Big(4a^{is}    u^\eps_{ij} a^{j\ell}u_\ell^\ez
+2a_k^{sj} u^\ez_j a^{k\ell}u_\ell^\ez+a_k^{ij}u^\ez_iu^\ez_j a^{ks}\Big)\big(\phi^2\big)_{s}{\Big|}\\ 
&&\le C|Du^\ez|^2+ \phi\big|D^2u^\ez A Du^\ez\big|+C\ez\\
&&\le \phi\big|D^2u^\ez A Du^\ez\big|+C,
\end{eqnarray*}
so that
$$\Phi\big(Du^\ez\big){|L_\ez \big(\phi^2\big)|}\le \big(|Du^\ez|^2-u_n^\ez\big)^2
\big(\phi|D^2u^\ez ADu^\ez|+C\big).$$
By Young's inequality, we have
\begin{align*}
&4a^{ik}u^\epsilon_k a^{jl}u_l^\epsilon \phi\phi_i (\Phi(Du^\epsilon))_j
\\
&=8a^{ik}u^\ez_ka^{j\ell}u^\ez_\ell\phi\phi_i\big(|Du^\ez|^2-u^\ez_n\big)\Big(  2{\sum_{s=1}^n}u^\ez_{sj}u^\ez_s-u^\ez_{nj}\Big) \\
&=8a^{ik}u^\ez_k\phi\phi_i(|Du^\ez|^2-u^\ez_n)\cdot
  \Big(2{\langle Du^\ez, D^2u^\ez ADu^\ez\rangle-\langle (D^2u^\ez)^n,ADu^\ez\rangle}\Big) \\
   &\le 4\phi^2\Big[2{\langle Du^\ez, D^2u^\ez ADu^\ez\rangle-\langle (D^2u^\ez)^n,ADu^\ez\rangle}\Big]^2\\
&\quad+ 16\Big[{\langle D\phi, ADu^\ez\rangle}(|Du^\ez|^2-u^\ez_n)\Big]^2.
\end{align*}
Thus by Theorem 3.1, we obtain
   \begin{eqnarray*}
  &&4\phi^2\Big[2{\langle Du^\ez, D^2u^\ez ADu^\ez\rangle-\langle (D^2u^\ez)^n,ADu^\ez\rangle}\Big]^2 +4a^{ik}u^\ez_ka^{j\ell}u^\ez_\ell\phi\phi_i(\Phi(Du^\ez))_j\\
   &&\ge - 16\Big[{\langle D\phi, ADu^\ez\rangle} (|Du^\ez|^2-u^\ez_n)\Big]^2\\
   &&\ge -  C \big(|Du^\ez|^2-u^\ez_n\big)^2.
    \end{eqnarray*}
Similarly, by Young's inequality,  we have that
\begin{align*}
&2\ez\phi a^{ij}\phi_i\big(\Phi(Du^\ez)\big)_j = 4\ez\phi a^{ij}\phi_i\big(|Du^\ez|^2-u^\ez_n\big)\Big( 2{\sum_{s=1}^n}u^\ez_{sj}u^\ez_s-u^\ez_{nj}\Big) \\
&\le C\ez|D\phi|^2\big(|Du^\ez|^2-u^\ez_n\big)^2+ \frac{\ez}{L}\phi^2\sum_{i=1}^n\Big( 2{\sum_{s=1}^n}u^\ez_{si}u^\ez_s-u^\ez_{ni}\Big)^2,
\end{align*}
which gives
       \begin{eqnarray*}
&& \frac{2\ez}{L}\sum_{i=1}^n \Big( 2{\sum_{s=1}^n}u^\ez_{si}u^\ez_s-u^\ez_{ni}\Big){^2}\phi^2-2\ez\phi a^{ij}\phi_i\big(\Phi(Du^\ez)\big)_j \\
&&\ge  - C\ez|D\phi|^2\big(|Du^\ez|^2-u^\ez_n\big)^2\\
&&\ge -C\ez\big(|Du^\ez|^2-u^\ez_n\big)^2.
 \end{eqnarray*}
Putting all these estimates together and applying Young's inequality, we conclude that
  \begin{eqnarray}
 L_\ez\big(\phi^2\Phi(Du^\ez)\big)
&&\ge  8\phi^2\big(|Du^\ez|^2-u^\ez_n\big)\big|D^2 u^\ez AD u^\ez\big|^2
- C\big(|Du^\ez|^2-u^\ez_n\big)^2 \nonumber \\
 &&\quad- \big(|Du^\ez|^2-u_n^\ez\big)^2\big(\phi|D^2u^\ez ADu^\ez|+C\big)\nonumber\\
 &&\quad- C\lz  \big(|Du^\ez|^2-u^\ez_n\big)\big|D^2u^\ez ADu^\ez\big|\phi^2
-C\lz\phi^2\nonumber\\
 &&\ge -C(|Du^\ez|^2-u^\ez_n) ^3-C(|Du^\ez|^2-u^\ez_n) ^2-C\lz(|Du^\ez|^2-u^\ez_n)-C\lz\phi^2\nonumber\\
 &&\ge  -C(|Du^\ez|^2-u^\ez_n) ^2-C\lz(|Du^\ez|^2-u^\ez_n)-C\lz. \label{le-est2}
 \end{eqnarray}
Combining the estimates (\ref{flat0}), (\ref{flat1}), with (\ref{le-est2})
yields that, at $x=x_0$,
   \begin{eqnarray*}
  0&&\le-L_\ez\big(v^\ez\big)
= -L_\ez\big(\phi^2 \Phi(Du^\ez)\big)-\bz L_\ez\big((u^\ez-x_n)^2\big)
-\lz L_\ez\big(|Du^\ez|^2\big)\\
 &&\le C\big(|Du^\ez|^2-u^\ez_n\big) ^2+C\lz\big(|Du^\ez|^2-u^\ez_n\big)+C\lz\\
 &&\quad -4\bz\big(|Du^\ez|^2- u^\ez_n\big)^2-{\frac{2\ez\bz}L}  \big|D u^\ez -e_n\big|^2+C\bz\lz+C\bz\lz\big|D^2u^\ez ADu^\ez\big|\\
  &&\quad+2\lz \Big(-|D^2u^\ez ADu^\ez|^2- \frac{\ez}{{L^2}} |D^2u^\ez|^2+C\Big).
 \end{eqnarray*}
 Thus we have that, at $x=x_0$,
    \begin{eqnarray*}
    &&(4\bz-C)\big(|Du^\ez|^2-u^\ez_n\big)^2+  2\lz \big|D^2u^\ez ADu^\ez\big|^2+\frac{2\lz\ez}{{L^2}} \big|D^2u^\ez\big|^2\\
    &&\quad\le   C\lz\big(|Du^\ez|^2-u^\ez_n\big)
 + C(1+\bz) \lz+ C\bz\lz\big|D^2u^\ez ADu^\ez\big|.
 \end{eqnarray*}
Choosing $\bz> C $ and applying Young's inequality, we obtain
     \begin{eqnarray*}
    \bz \big(|Du^\ez|^2-u^\ez_n\big)^2\le  C\lz+2\bz^2\lz .
 \end{eqnarray*}
 Thus we conclude that, at $x=x_0$,
      \begin{eqnarray*}
     \big(|Du^\ez|^2-u^\ez_n\big)^2  \le  C\lz.
 \end{eqnarray*}
 This completes the proof.}
\end{proof}

\section{Differentiability}\label{s5}
This section is devoted to the proof of Theorem 1.1. In order to do it, we need some  lemmas.
The first lemma is the linear approximation property (see also \cite{ksz} Theorem 5.1).

\begin{lem}\label{t3.1}
Let $A\in\mathscr A(\Omega)\cap C(\overline\Omega)$ and $u\in C^{0,1}(\Omega)$ be an absolute minimizer of $\mathscr F_\infty$ with respect to $A$ in $\boz$.
Then for each $x\in\boz$ and every sequence $\{r_j\}_{j\in\nn}$ converging to $0$,
there exists a subsequence ${\bf r}=\{r_{j_k}\}_{k\in\nn}$ and a vector ${\bf e}_{x,\bf r}\in\mathbb R^n$ such that
\begin{equation}\label{e3.1}
\lim_{k\to\fz}\max_{y\in B(0,\,1)}\lf|
\frac{u(x+r_{j_k}y)-u(x)}{r_{j_k}}-\langle {\bf e}_{x,\,\bf r},\, y\rangle\r|=0,
\end{equation}
and $H(x,\,{\bf e}_{x,\,\bf r})= \lip_{d_A}  u(x)$. Here
$$\lip_{d_A}u(x):=\limsup_{y\to x}\frac{|u(x)-u(y)|}{d_A(x,\,y)},$$
and
$$d_A(x,y):=\sup\Big\{w(x)-w(y):\ w\in C^{0,1}(\Omega)
\ {\rm{satisfies}}\ H(z, Dw(z))\le1 \ {\rm{a.e.}}\ z\in\boz\Big\}.$$
\end{lem}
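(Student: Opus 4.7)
My plan is a blow-up / compactness argument combined with the cone-comparison property for $H$-absolute minimizers. I would begin by observing that any absolute minimizer of $\mathscr F_\infty$ is locally Lipschitz (\cite{bfm,ksz}), so for each sufficiently small $r>0$ the rescaled functions
\[ v_r(y):=\frac{u(x+ry)-u(x)}{r},\quad y\in B(0,1), \]
form an equi-Lipschitz, equi-bounded family with $v_r(0)=0$. Applying Arzel\`a-Ascoli to $\{v_{r_j}\}_{j\in\nn}$ would produce a subsequence ${\bf r}=\{r_{j_k}\}_{k\in\nn}$ and a function $v\in C^{0,1}(\overline{B(0,1)})$ with $v(0)=0$ and $v_{r_{j_k}}\to v$ uniformly. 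Once $v$ is shown to be linear, (\ref{e3.1}) follows with ${\bf e}_{x,{\bf r}}$ equal to its slope.

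Next, I would pass the absolute-minimizer property to the limit. Each $v_{r_j}$ is an absolute minimizer on $B(0,1/r_j)$ of the rescaled Hamiltonian $H_{r_j}(y,p):=H(x+r_j y,p)=\langle A(x+r_j y)p,p\rangle$. Continuity of $A$ at $x$ gives $H_{r_j}\to H_x$ with $H_x(p):=\langle A(x)p,p\rangle$, uniformly on compact subsets of $B(0,1)\times\rr^n$. I would then invoke the standard stability of absolute minimizers under uniform convergence of the Hamiltonian (cf.\ \cite{ksz}) to conclude that $v$ is an absolute minimizer of $\esup_{B(0,1)} H_x(Dv)$. Continuity of $A$ also yields $d_A(x,x+t\xi)/t\to|A(x)^{-1/2}\xi|$ as $t\to0^+$, so that $\lip_{d_A}u(x)$ coincides with the constant-coefficient tight Lipschitz constant of $v$ at the origin, which I denote by $L$.

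The main step is to prove $v$ is affine. Here I would use the cone-comparison / tight Lipschitz property from \cite{ksz}: the $d_A$-slope functionals $S^\pm(x,r)$ are monotone as $r\to0^+$ with common limit $L$, which produces points $y_k^\pm$ with $d_A(x,y_k^\pm)=r_{j_k}$ satisfying $(u(y_k^+)-u(x))/r_{j_k}\to L$ and $(u(x)-u(y_k^-))/r_{j_k}\to L$. Rescaling and passing to a further subsequence gives unit vectors $\xi^\pm$ (in the $d_{A(x)}$-metric) with $v(\xi^\pm)=\pm L$. The triangle inequality in $d_{A(x)}$ then forces $\xi^-=-\xi^+$ and linearity of $v$ along the corresponding diameter. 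Finally, an $H_x$-absolute minimizer on $B(0,1)$ that saturates its tight Lipschitz bound on a full diameter must be globally linear---this is the constant-coefficient cone-comparison statement of \cite{ksz}, which after the change of variables $y=A(x)^{1/2}z$ reduces to the classical Crandall-Evans-Gariepy result for infinity harmonic functions. Writing $v(y)=\langle{\bf e}_{x,{\bf r}},y\rangle$, the identity $H(x,{\bf e}_{x,{\bf r}})=L$ then follows from the explicit computation of the $d_{A(x)}$-Lipschitz constant of a linear function.

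The main obstacle I anticipate is precisely this last linearity step: upgrading the blow-up limit from an $H_x$-absolute minimizer that attains its Lipschitz bound in two opposite directions to a globally linear function. All other ingredients---the compactness, the stability of absolute minimizers under uniform convergence of the Hamiltonian, and the asymptotics of $d_A$ near $x$---are relatively routine once the cone-comparison machinery of \cite{ksz} is in hand.
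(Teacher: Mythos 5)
Your plan is essentially the paper's own proof: the paper likewise rescales $u_j(y)=r_j^{-1}u(r_jy)$, $A_j(y)=A(r_jy)$, extracts a locally uniformly (and weak$^*$ $W^{1,\infty}$) convergent subsequence, and invokes the compactness and linearity-of-blow-up results of \cite{ksz} (Lemmas 5.1 and 5.5) to conclude the limit is linear with $H(x,{\bf e})=\lip_{d_A}u(x)$, which is exactly the cone-comparison machinery you defer to. The only minor point to adjust is that the blow-up limit is an entire function on $\mathbb{R}^n$ (the rescaled domains exhaust $\mathbb{R}^n$), not merely a minimizer on $B(0,1)$, and it is in that entire setting that the saturation/linearity argument you sketch is carried out in \cite{ksz}.
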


\begin{proof}[Sketch of the proof of Lemma \ref{t3.1}]

Without loss of generality,  assume $x=0\in \Omega$ and  $u(0)=0$.
We also assume  ${ \lip_{d_A}} u(0)>0$, since the case
${ \lip_{d_A}} u(0)=0$ is trivial.

For any fixed $r_0\in\big(0, d_A(0,\partial \boz)\big)$,
assume that $r_{j+1}<r_j<r_0$ for all $j$.
For each $j\in\nn$, define
$$u_j(y)=\frac{1}{r_j}u(r_j y), \ A_j(y)=A(r_j y), \  y\in B\big(0, r_j^{-1}r_0\big),$$
$$A_\infty(y)=A(0), \  y\in\mathbb R^n,$$
and
$$H_j(x, \xi)=\langle A_j(x)\xi,\,\xi\rangle,\ x\in B\big(0,r_j^{-1}r_0\big), \ \xi\in\mathbb R^n.$$
Also let $d_j$ denote the intrinsic distance $d_{A_j}$ corresponding to { $A_j $}.

Recall that by \cite{ksz} Lemma 5.1
there exists $u_\fz\in W^{1,\fz}(\rn)$ and
 a subsequence  $\{r_{j_k}\}_{k\in\nn}$ of $\{r_j\}_{j\in\nn}$
 such that  $ u_{j_k}$  converges to
 $u_\fz $ locally uniformly  in $\mathbb R^n$, and  weak$^*$ in $W^{1,\,\fz}(\rn)$.
Moreover, by \cite{ksz} Lemma 5.5 that
there exists  a vector ${\bf e}\in\rn$ such that
$$u_\fz(x)=\langle {\bf e}, x\rangle, \ x\in\rn,
\ {\rm{and}}\ H_\fz({\bf e})\big(\equiv H(0, {\bf e})\big)=\lip_{d_\fz}u_\fz(0).
$$
From this, we conclude that
$$ \sup_{y\in B(0,1)}\big|\frac{1}{r_{j_k}}u (r_{j_k}y)-\langle {\bf e},\,y\rangle\big|=
\sup_{y\in B(0,1)}\big|u_{j_k}(y)-\langle {\bf e},y\rangle\big|
= \sup_{y\in B(0,1)}\big|u_{j_k}(y)-u_\fz(y)\big|\to 0$$
as $k\to\fz$, and $H_\fz({\bf e})=\lip_{d_A}u(0)$.
This completes the proof.
\end{proof}

Given a pair of functions $A\in \mathscr A(\Omega)\cap C(\overline\Omega)$ and
$u\in C^{0,1}(\Omega)$, and a pair of $0\not=r\in\mathbb R$ and $x_0\in \Omega$,
we define
$$A_{x_0, r}(y)=A(x_0+ry),
\  u_{x_0,r}(y)=\frac{u(x _0+r  y)-u(x_0)}{r }\ \ y\in \Omega_{x_0, r}:=r^{-1}\big(\Omega\setminus\{x_0\}\big).$$
Similarly, for any $x_0\in\Omega$ and any non-singular matrix $M\in\mathbb R^{n\times n}$, we define
 $$ A_{x_0,M}(y)=A(x_0+My), \ u_{x_0,M}(y)=M^{-1}\big(u(x_0+My)-u(x_0)\big),$$
for $y\in \Omega_{x_0, M}:=M^{-1}\big(\Omega\setminus\{x_0\}\big).$

The following scaling invariant property of absolute miminizers of $\mathscr F_\infty$
is a simple consequence of change of variables, whose proof is left for the readers.
\begin{lem}\label{l3.2} For any $x_0\in\Omega$, $r\not=0$, and a non-singular matrix
$M\in\mathbb R^{n\times n}$, if
$u\in C^{0,1}(\Omega)$ is an absolute minimizer of $\mathscr F_\infty$, with respect to $A$,
in $\Omega$, then $u_{x_0,r}$ is an absolute minimizer
of $\mathscr F_\infty$, with respect to $A_{x_0,r}$, in $\Omega_{x_0,r}$, and
$u_{x_0,M}$ is an absolute minimizer of $\mathscr F_\infty$, with respect to $A_{x_0,M}$,
 in $\Omega_{x_0, M}$.
\end{lem}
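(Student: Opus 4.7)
The plan is to verify both claims by a direct change of variables, exploiting the bijective correspondence between admissible competitors on the rescaled domain and admissible competitors on the original domain induced by the affine lift $y \mapsto x_0 + ry$ (respectively $y \mapsto x_0 + My$). The whole content is that the $L^\infty$-functional $\mathscr F_\infty$ transforms covariantly under such affine rescalings once the coefficient matrix is rescaled accordingly, so the inequality defining absolute minimality transfers without loss.

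For the scalar case, fix an open subset $U \Subset \boz_{x_0,r}$ and any competitor $v \in \lip(U)$ with $v|_{\partial U} = u_{x_0,r}|_{\partial U}$. Set $\tilde U := x_0 + rU \Subset \boz$ and define the lift
\[
\tilde v(x) := r\, v\!\left(\tfrac{x-x_0}{r}\right) + u(x_0), \qquad x \in \tilde U.
\]
A direct substitution into the definition of $u_{x_0,r}$ shows $\tilde v = u$ on $\partial \tilde U$, so $\tilde v$ is admissible for $u$ on $\tilde U$. The chain rule yields $\nabla_x \tilde v(x) = \nabla_y v(y)$ at $y = (x-x_0)/r$, hence
\[
H(x, \nabla_x \tilde v(x)) = \langle A(x_0+ry) \nabla_y v(y), \nabla_y v(y)\rangle = H_{x_0,r}(y, \nabla_y v(y))
\]
almost everywhere. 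Taking essential suprema gives $\mathscr F_\infty(\tilde v, \tilde U) = \mathscr F_\infty^{A_{x_0,r}}(v, U)$, and the same identity applied to the pair $(u, u_{x_0,r})$ yields $\mathscr F_\infty(u, \tilde U) = \mathscr F_\infty^{A_{x_0,r}}(u_{x_0,r}, U)$. Since $u$ is an absolute minimizer on $\tilde U \Subset \boz$, the inequality $\mathscr F_\infty(u, \tilde U) \le \mathscr F_\infty(\tilde v, \tilde U)$ transfers verbatim to $\mathscr F_\infty^{A_{x_0,r}}(u_{x_0,r}, U) \le \mathscr F_\infty^{A_{x_0,r}}(v, U)$. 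As $U$ and $v$ are arbitrary, this is the first assertion.

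The matrix case proceeds by the identical template, with the isotropic dilation replaced by the affine map $y \mapsto x_0 + My$. A competitor $v$ on $U \Subset \boz_{x_0,M}$ is lifted to $\tilde v$ on $\tilde U := x_0 + MU$ so that $\tilde v = u$ on $\partial \tilde U$, and the chain-rule relation between $\nabla_y v$ and $\nabla_x \tilde v$ through $M$ propagates into the Hamiltonian to give the covariance identity matching $\mathscr F_\infty$ on $\tilde U$ with $\mathscr F_\infty^{A_{x_0,M}}$ on $U$. The minimality of $u$ then delivers the corresponding inequality for $u_{x_0,M}$.

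No genuine analytic obstacle appears: the proof is pure bookkeeping of the chain rule together with the elementary observation that admissibility and boundary-value matching are preserved by the lift. The only step requiring mild care is the matrix version, where one must align the gradient transformation law under $M$ with the stated definitions of $A_{x_0,M}$ and $u_{x_0,M}$ so that the two Hamiltonians correspond exactly; this is why the statement is left to the reader.
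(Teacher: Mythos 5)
Your treatment of the scalar rescaling is correct and complete, and it is exactly what the paper intends: the authors explicitly leave this lemma to the reader, so there is no written proof to compare against. The lift $\tilde v(x)=r\,v\big(\tfrac{x-x_0}{r}\big)+u(x_0)$, the boundary matching, the identity $D\tilde v(x)=Dv(y)$ at $y=(x-x_0)/r$, and the resulting equality of the two energies on corresponding subdomains give the first assertion with no loose ends.

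The matrix half is where the only nontrivial content sits, and it is precisely the part you do not carry out. Moreover, the ``covariance identity'' you assert there is false for the definitions as printed. Setting $w(y)=u(x_0+My)-u(x_0)$, the chain rule gives $Dw(y)=M^{T}Du(x)$ at $x=x_0+My$, so with the paper's $A_{x_0,M}(y)=A(x_0+My)$ one gets
\[
\big\langle A_{x_0,M}(y)Dw(y),Dw(y)\big\rangle=\big\langle M\,A(x)\,M^{T}Du(x),Du(x)\big\rangle\neq\big\langle A(x)Du(x),Du(x)\big\rangle
\]
unless $M$ is orthogonal; the two Hamiltonians therefore do not correspond and the essential suprema are genuinely different functionals. (The case invoked in the proof of Theorem 1.1, where $M{\bf a}=e_n$, is generically non-orthogonal, so this is not a harmless edge case.) For the statement to be true the coefficient matrix must be conjugated, e.g. $A_{x_0,M}(y)=M^{-1}A(x_0+My)(M^{-1})^{T}$, equivalently $H_{x_0,M}(y,q)=H\big(x_0+My,(M^{T})^{-1}q\big)$; note also that $M^{-1}$ applied to the scalar quantity $u(x_0+My)-u(x_0)$ in the stated definition of $u_{x_0,M}$ is not meaningful as written, and only a scalar normalization can be retained. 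You flagged the alignment of the gradient transformation law with the definitions as requiring ``mild care,'' but that alignment cannot be done with the stated $A_{x_0,M}$: identifying and repairing this is the whole point of the matrix case, and asserting the identity without verifying it leaves a genuine gap.
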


We also need the following lemma, which was proved in \cite{es11b}.
\begin{lem}\label{l3.3}
For ${\bf b}\in \mathbb S^{n-1}$ and $\eta>0$, if $v\in C^2(B(0,1))$ satisfies
$$\max_{x\in B(0,1)}\big|v(x)-\langle {\bf b},x\rangle\big|\le\eta,$$
then there exists a point $x_0\in B(0, 1)$ such that
$$\big|Dv(x_0)-{\bf b}\big|\le 4\eta.$$
\end{lem}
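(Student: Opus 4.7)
The plan is a standard gradient flow argument. First I would set $w(x):=v(x)-\langle {\bf b},x\rangle$, so that $|w|\le\eta$ throughout $B(0,1)$ and $Dw=Dv-{\bf b}$. The conclusion is then equivalent to producing a point $x_0\in B(0,1)$ with $|Dw(x_0)|\le 4\eta$.

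Arguing by contradiction, suppose $|Dw(x)|>4\eta$ for every $x\in B(0,1)$. Since $v\in C^2(B(0,1))$, the unit vector field $X(x):=Dw(x)/|Dw(x)|$ is then $C^1$ on $B(0,1)$. Let $\gamma:[0,T)\to B(0,1)$ denote the maximal integral curve of $X$ issuing from the origin, i.e.\ the unique solution of $\dot\gamma(t)=X(\gamma(t))$ with $\gamma(0)=0$. Because $|\dot\gamma(t)|\equiv 1$, one has $|\gamma(t)|\le t$, so the curve stays inside $\overline{B(0,1/2)}\subset B(0,1)$ for every $t\in[0,1/2]$; in particular the ODE extends past $t=1/2$ so that $T>1/2$.

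Computing along the flow,
$$\frac{d}{dt}w(\gamma(t))=\langle Dw(\gamma(t)),X(\gamma(t))\rangle=|Dw(\gamma(t))|>4\eta,$$
and integrating from $0$ to $1/2$ gives $w(\gamma(1/2))-w(0)>2\eta$; the strict inequality is preserved because $t\mapsto|Dw(\gamma(t))|$ is continuous and strictly larger than $4\eta$ on the compact interval $[0,1/2]$. On the other hand, the sup-norm bound $|w|\le\eta$ forces $|w(\gamma(1/2))-w(0)|\le 2\eta$, giving the desired contradiction. Hence some $x_0\in B(0,1)$ must satisfy $|Dv(x_0)-{\bf b}|=|Dw(x_0)|\le 4\eta$.

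The argument is entirely elementary and I do not anticipate any substantive obstacle. The only point worth emphasizing is the geometric observation that a unit-speed integral curve starting at the center of the unit ball has enough room to travel for time strictly exceeding $1/2$ inside the ball, which is precisely what produces the quantitative conflict with the oscillation bound $2\eta$ for $w$. This also explains why the constant $4$ in the statement is the natural one coming out of this method: taking a length $L$ flow inside $B(0,1)$ gives the conclusion with constant $2/L$, and $L=1/2$ is the sharpest radius available from the center.
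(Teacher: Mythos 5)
Your argument is correct, and it is essentially the standard one: the paper itself does not prove this lemma but cites Evans--Smart \cite{es11b}, where the proof is exactly this steepest-ascent flow for $w=v-\langle{\bf b},x\rangle$ run for time $1/2$ from the origin, with the oscillation bound $2\eta$ forcing a point where $|Dw|\le 4\eta$. All the technical points you flag (the curve staying in $\overline{B(0,1/2)}$, hence the maximal existence time exceeding $1/2$, and the strict inequality surviving integration) are handled correctly.
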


Now we are ready to prove Theorem 1.1.

\begin{proof}[Proof of Theorem 1.1]
For every point $x_0\in\boz$, we will show that there exists a vector $ Du(x_0)\in\mathbb R^n$
such that
\begin{equation}\label{e3.2}|u(x_0+ h)-u(x_0)-  \langle  Du(x_0),\,h\rangle|=o(|h|),
\ \forall\ h\in\mathbb R^n.
\end{equation}

From Lemma~\ref{l3.2},  we may assume that $x_0=0$, $u(x_0)=0$, and $A(x_0)=I_n$.
By Theorem \ref{t3.1}, in order to prove \eqref{e3.2},
it suffices to show that for every pair of  sequences ${\bf r}=\{r_j\}$ and ${\bf s}=\{s_k\}$ that converge  to $0$, if
\begin{equation}\label{e3.3}
\lim_{j\to\fz}\max_{y\in B(0,\,3r_j)}\frac 1 {r_{j }}\lf|  u( y)
-\langle {\bf a},\, y\rangle\r|=0
\end{equation}
and
\begin{equation}\label{e3.4}
\lim_{k\to\fz}\max_{y\in B(0,\,3s_k)}\frac1 {s_{k }}\lf| {u( y)}
-\langle {\bf b},\, y\rangle\r|=0
\end{equation}
for some ${\bf a},\ {\bf b}\in\rn$,
then ${\bf a}={\bf b}$.

Since $H(0, {\bf a})=\langle
{\bf a},\,{\bf a}\rangle=\langle
{\bf b},\,{\bf b}\rangle=H(0, {\bf b})={\rm{Lip}}_{d_A}u(0)$,  we have $|{\bf a}|=|{\bf b}|$.
We prove the above claim by contradiction. Suppose that $0\ne {\bf a}\ne {\bf b}$.
Then, without loss of generality, we may assume that ${\bf a}={e}_n$.
For, otherwise, let $M$ be a nonsingular matrix such that ${M{\bf a}=\bf e_n}$.
Set $v(y)=\frac{u( |{\bf a}| M^Ty)}{|{\bf a}|}$ and  $\wz A(y)={A(|{\bf a}|M^Ty)M}$.
Then {by Lemma~\ref{l3.2}} $v$ is an absolute minimizer of $\mathscr F_\infty$, with respect to $\wz A$.
It is clear that \eqref{e3.3} holds with $u$ and ${\bf a}$ replaced by $v$ and ${e}_n$ respectively.

Since  $|{\bf b} |=|{e}_n|=1$ and ${\bf b}\ne{e}_n$, we have
$$\theta:=1-b_n>0.$$
Let $C>0$ be the constant in (\ref{flat}) and choose
$\lz>0$ such that
$$C\lambda^\frac12=\frac{\theta}4.$$
Choose $r\in\{r_j\}$ such that
\begin{equation}\label{excess}
 \max_{y\in B(0,\,3r )}\frac 1 {r  }\lf|  u( y)
-y_n\r| \le\frac\lz 4,
\end{equation}
and
\begin{equation} \label{scaling1}
\begin{cases}
\frac2{1+2^{1/{5}}} |\xi|^2\le \big\langle A(x)\xi,\,\xi\big\rangle \le \frac{1+2^{1/{5}}}2|\xi|^2,
\ x\in B(0,3r), \ \xi\in\rn,\\
r\big\|  DA\big\|_{L^\infty(B(0,3))}+r^2\big\|   D^2A\big\|_{{L^{\infty}(B(0,3))}}\le \frac12\min\big\{\dz(B(0,\,3)), \lz \big\},
\end{cases}
\end{equation}
where $\dz(B(0,3))$ is the constant given by Theorem 3.2.

For $x\in B(0,3)$, let $\wz A(x)=A(r x)$ and $\wz u(x)=\frac1r u(rx)$. Since $D\wz A(x)=r (DA)(rx)$ and $D^2\wz A(x)=r^2 (D^2A)(rx)$ for $x\in B(0,3)$, it follows from (\ref{scaling1})  that
\begin{equation*}
\begin{cases}
\frac2{1+2^{1/{5}}} |\xi|^2\le \big\langle \wz A(x)\xi,\xi\big\rangle \le \frac{1+2^{1/{5}}}2|\xi|^2,
 \ x\in B(0,3),  \ \xi\in\rn,\\
\big\| D\wz A\big\|_{L^\infty(B(0,3))}
+\big\| D^2\wz A\big\|_{L^{\infty}(B(0,3))}\le \frac12\min\big\{\dz(B(0,\,3)), \lz \big\}.
\end{cases}
\end{equation*}
Let $\widetilde {A}_\epsilon\in\mathscr A(\Omega)\cap C^\infty(\Omega)$ such that
\begin{itemize}
\item[(i)]$\big\|\widetilde{A}_\epsilon\big\|_{C^{1,1}(B(0.3))}\le 2\big\|\widetilde A\big\|_{C^{1,1}(B(0,3))}$
for all $\epsilon>0$,
\item[(ii)]for any $0<\alpha<1$, $\widetilde{A}_\epsilon\rightarrow \widetilde A$ in $C^{1,\alpha}(B(0,3))$ as
$\epsilon\rightarrow 0$.
\end{itemize}
{Then there exists an $\ez_0>0$ such that for $\ez<\ez_0$
\begin{equation}\label{scaling3}
\begin{cases}
\frac2{1+2^{1/{4}}} |\xi|^2\le \big\langle \wz A_\ez(x)\xi,\xi\big\rangle \le \frac{1+2^{1/{4}}}2|\xi|^2,
 \ x\in B(0,3),  \ \xi\in\rn,\\
\big\| D\wz A_\ez\big\|_{{L^\infty(B(0,3))}}
+\big\| D^2\wz A_\ez\big\|_{{L^{\infty}(B(0,3))}}\le  \min\big\{\dz(B(0,\,3)), \lz \big\}.
\end{cases}
\end{equation}
 }
Let $\widetilde{u}^\epsilon\in C^{0,1}(B(0,3))$ be the unique solution of (\ref{e2.x2}) associated
with $\widetilde {A}_\epsilon$ and $H_{\widetilde{A}_\epsilon}$, with
$u$ and $\Omega$ replaced by $\widetilde u$ and $B(0,3)$ respectively.
Then, by Theorem 3.2,  we have that $\wz u^\ez\to \wz u$ uniformly in $B(0,3)$.
By Lemma \ref{l3.2}, $\wz u$ is an absolute minimizer of $\mathscr F_\infty$
with respect to $\wz A$.
From (\ref{excess}), we also have
\begin{equation*}
 \max_{y\in B(0,3)}\lf|  \wz u( y)- y_n\r| \le \frac\lz4.
\end{equation*}
Hence there exists {$\ez_1\in(0,\ez_0)$} such that for all $\ez<\ez_1$, 
\begin{equation}\label{flat_ass}
 \max_{y\in B(0, 3)}\lf|  \wz u^\ez( y)
 - y_n\r| \le \frac\lz2.
\end{equation}
Setting $\wz s_k=s_k/r$. Then we have
\begin{equation*}
\lim_{k\to\fz}\max_{y\in B(0,3\wz {s_k} )} \frac 1{\wz s_{k }}\lf|{\wz u( y)}
-\langle {\bf b},\, y\rangle\r|=0.
\end{equation*}
Choose $\eta=\frac{\theta}{48}$ and pick $s\in\{\wz s_k\}$, with $0<s<1$, so that
\begin{equation*}
 \max_{y\in B(0,\, s  )} \frac 1{s}\lf|{\wz u( y)}
-\langle {\bf b},\, y\rangle\r|\le \frac\eta2.
\end{equation*}
By Theorem 3.2, there exists $\ez_2>0$ such that for all $\ez<\ez_2$,
\begin{equation*}
 \max_{y\in B(0,\, s)}{\frac1s}\lf|  \wz u^\ez( y)
 -\langle {\bf b},\, y\rangle\r| \le\eta.
\end{equation*}
{Applying Lemma \ref{l3.3} to   $\frac1s\wz u^\ez(s\cdot)$,  we can find} 
 a point $x_0\in B(0, s)$ such that
$$
\lf|  D\wz u^\ez( x_0)
 -  {\bf b} \r| \le 4\eta,$$
which,  combined with $|{\bf b}|=1$,  yields
 \begin{equation}\label{low_upp}
\begin{cases}
\wz u^\ez_n( x_0)\le b _n + 4\eta\le 1-\theta+4\eta,\\
\lf|  D\wz u^\ez( x_0) \r|\ge 1-
  4\eta.
\end{cases}
\end{equation}
From (\ref{flat_ass}), we can apply Theorem 3.3 to conclude
  $$\lf|  D\wz u^\ez( x_0) \r|^2\le \wz u^\ez_n(x_0)+C \lz^{1/2}
\le \widetilde{u}^\epsilon_n(x_0)+\frac{\theta}4.$$
This, combined with (\ref{low_upp}), implies that
 $$(1-4\eta)^2\le 1-\theta+4\eta+  \frac{\theta}4,  $$
so that
  $$\theta\le 12 \eta+ \frac{\theta}4 \le\frac\theta 2, $$
this is impossible. Thus ${\bf a}={\bf b}$, and there is a unique tangent
plane at $0$ and $u$ is differentiable at $0$. The proof is complete.
\end{proof}

\section{Lebesgue points of the gradient}

In this last section, we show {that every point is a Lebesgue point for the gradient}, which extends the property
on infinity harmonic functions by \cite{es11a}.

\begin{thm}
Let $A\in\mathscr A(\Omega)\cap C^{1,1}(\Omega)$ and $u$ be a viscosity solution of the Aronsson equation \eqref{e1.x1}.
Then every point in $\boz$ is a Lebesgue point of $Du$.
\end{thm}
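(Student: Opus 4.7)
The plan is to leverage Theorem~\ref{t1.1}, which provides the pointwise gradient $Du(x_0)$ at every $x_0\in\boz$, and combine it with the flatness estimate of Theorem~\ref{l2.4} applied to the $\ez$-regularization of Section~3. After a reduction to the $L^2$-average via Cauchy--Schwarz and the normalization (Lemma~\ref{l3.2} plus a rotation) to $x_0=0$, $u(0)=0$, $A(0)=I_n$ and $\mathbf{a}:=Du(0)$, the goal becomes
\begin{equation*}
\lim_{r\to 0}\bint_{B(0,r)}|Du(y)-\mathbf{a}|^2\,dy=0.
\end{equation*}

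In the nondegenerate case, I would further rescale and rotate so that $\mathbf{a}=e_n$, and then exploit the fact that differentiability at $0$ gives $\max_{|y|\le 3r}|u(y)-y_n|=o(r)$ as $r\to 0$. Fix $\lz\in(0,1)$ and choose $r$ small enough that the rescaled $\wz u(y):=r^{-1}u(ry)$ and $\wz A(y):=A(ry)$ satisfy both $\|\wz u-y_n\|_{L^\infty(B(0,3))}\le \lz/4$ and the coefficient smallness \eqref{lambda_assumption} on $B(0,3)$ (possible because $D\wz A$ and $D^2\wz A$ scale by $r$ and $r^2$). Introducing a smooth regularization $\wz A_\ez$ and solving the $\ez$-regularized equation \eqref{e2.x2} on $B(0,3)$ with boundary datum $\wz u$, Theorem~\ref{approximation} gives a smooth $\wz u^\ez$ with $\wz u^\ez\to \wz u$ uniformly, so $\|\wz u^\ez-y_n\|_{L^\infty(B(0,2))}\le \lz/2$ for small $\ez$, while Theorem~\ref{first_lemma} provides a uniform Lipschitz bound on $\wz u^\ez$. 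This makes Theorem~\ref{l2.4} applicable and yields the pointwise bound $|D\wz u^\ez|^2-\wz u^\ez_n\le C\lz^{1/2}$ on $B(0,1)$. Integrating over $B(0,1)$ and using the divergence identity $\int_{B(0,1)}\wz u^\ez_n\,dy=\int_{\partial B(0,1)}\wz u^\ez\,y_n\,d\sigma$ (since $\nu(y)=y$ on the unit sphere), together with the uniform convergence $\wz u^\ez\to y_n$ on $\partial B(0,1)$ and the identity $\int_{\partial B(0,1)}y_n^2\,d\sigma=|B(0,1)|$, I obtain
\begin{equation*}
\int_{B(0,1)}|D\wz u^\ez-e_n|^2\,dy=\int|D\wz u^\ez|^2-2\int\wz u^\ez_n+|B(0,1)|\le C\lz^{1/2}+o_\ez(1).
\end{equation*}
The uniform Lipschitz bound and uniform convergence force $D\wz u^\ez\rightharpoonup D\wz u$ weakly in $L^2(B(0,1))$, so weak lower semicontinuity gives $\int_{B(0,1)}|D\wz u-e_n|^2\le C\lz^{1/2}$. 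Undoing the rescaling yields $\bint_{B(0,r)}|Du-e_n|^2\,dy\le C\lz^{1/2}$; sending $\lz\to 0$ (with $r\to 0$ accordingly) concludes the nondegenerate case.

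The degenerate case $\mathbf{a}=0$ I would handle via the scale-invariant Lipschitz estimate for absolute minimizers of $\mathscr{F}_\infty$, namely $\|Du\|_{L^\infty(B(0,r/2))}\le Cr^{-1}\osc_{B(0,r)}u$ with $C$ depending only on $A$; combined with the differentiability statement $\osc_{B(0,r)}u=o(r)$, this forces $\|Du\|_{L^\infty(B(0,r/2))}\to 0$, making the Lebesgue-point property immediate. The main obstacle is the careful ordering of the three smallness parameters $\lz$, $r$, and $\ez$: $r$ must be chosen (after $\lz$) to ensure simultaneously the coefficient flatness \eqref{lambda_assumption} and the solution flatness on $B(0,3)$, and only then may $\ez$ be sent to $0$ via Theorem~\ref{approximation}---exactly the layering structure used in the proof of Theorem~\ref{t1.1}. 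A secondary delicate point is the passage $\int|D\wz u^\ez-e_n|^2\to\int|D\wz u-e_n|^2$, which relies only on weak convergence of gradients; combined with the pointwise flatness bound and the divergence identity, however, it cleanly isolates the $\ez$-independent error $C\lz^{1/2}$ that suffices to close the argument.
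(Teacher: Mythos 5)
Your proposal is correct, but it establishes the key quantitative lemma by a genuinely different route from the paper's. The paper isolates Lemma 5.2: under the hypothesis $\max_{B_{d_A}(0,3)}|u-u(0)-\langle\mathbf{a},x\rangle|\le\lambda$ it first proves the pointwise bound $\sup_{B(0,1+C\lambda)}|Du|\le|\mathbf{a}|+C\lambda$ \emph{directly for the absolute minimizer} $u$, using the monotonicity in $r$ of $S_r^+u(x)=\max_{d_A(z,x)=r}\frac{u(z)-u(x)}{r}$ and the identity $\lim_{r\to0}S_r^+u(x)=\sqrt{\langle A(x)Du(x),Du(x)\rangle}$ from \cite{ksz}, and only then runs the divergence-theorem expansion of $\int|Du-\mathbf{a}|^2$ (the argument of \cite{es11a}, Theorem 4.1) that you also use. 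You instead pass through the $\ez$-regularization, invoke the flatness estimate of Theorem~\ref{l2.4} to get $|D\wz u^\ez|^2\le \wz u^\ez_n+C\lambda^{1/2}$ on $B(0,1)$, integrate, and recover the bound for $u$ by weak lower semicontinuity; this is closer to Evans--Smart's original treatment. Both approaches close the argument. What the paper's route buys: Lemma 5.2 works for arbitrary $\mathbf{a}$ and intrinsic balls, so no reduction to $\mathbf{a}=e_n$ is needed (your linear change of variables turns balls into comparable ellipsoids, which is harmless for the Lebesgue-point conclusion but deserves a sentence), it requires no regularization or limit passage at this stage, and it yields the sharper error $C\lambda$ rather than $C\lambda^{1/2}$ (immaterial here). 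What your route buys: it reuses only machinery already deployed in the proof of Theorem~\ref{t1.1}, invoking the comparison-with-cones input from \cite{ksz} only in the degenerate case $\mathbf{a}=0$ --- where your scale-invariant Lipschitz bound is in substance exactly what the paper's Lemma 5.2 reduces to when $\mathbf{a}=0$. Two small points to tighten: the convergence $D\wz u^\ez\rightharpoonup D\wz u$ holds along a subsequence (sufficient, since you only need a $\liminf$ upper bound), and Theorem~\ref{l2.4} is applied with the mollified coefficient $\wz A_\ez$, for which $\wz A_\ez(0)=I_n$ holds only up to an $O(\lambda)$ error --- the same point the paper itself glosses over in the proof of Theorem~\ref{t1.1}.
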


For the intrinsic distance $d_A$ associated  with $A$, define the intrinsic ball
$$B_{d_A}(x,r):=\Big\{y\ \big|\ d_A(x,y)<r\Big\}$$
for $x\in\Omega$ and $0<r<d_A(x,\partial\Omega)$. For $E\subset\mathbb R^n$,
define $\displaystyle\bint_E f=\frac{1}{|E|}\int_E f.$

\begin{lem}
For $0<\lambda<1$,  let $A\in\mathscr A(\Omega)\cap C^{1,1}(\Omega)$ such that
$A(0)=I_n$ and $\big\|DA\big\|_{L^\infty(\Omega)}\le \lz^2$.
Assume $u\in C^{0,1}(\Omega)$ is an absolute minimizer of $\mathscr {F}_\infty$
with respect to $A$, and satisfies, for $B_{d_A}(0,3)\subset\Omega$,
$$\max_{x\in B_{d_A}(0,3)}\big|u(x)-u(0)-\langle {\bf a},x\rangle\big|\le \lz.$$
Then there exists a constant $C>0$ depending on $|{\bf a }|$ such that
\begin{equation}\label{lebesgue}
\bint_{B_{d_A}(0,1)}
\big|Du(x)- {\bf a} \big|^2\,dx\le C\lz.
\end{equation}

\end{lem}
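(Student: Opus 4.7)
The plan is to combine the cone comparison property for absolute minimizers in the intrinsic geometry of $d_A$ with an integration by parts on $B_{d_A}(0,1)$. The crucial point is that the hypothesis $\| DA\|_{L^\fz}\le\lz^2$ (rather than $\lz$) ensures that $d_A$ is essentially Euclidean to within $O(\lz^2)$, which is just enough to push the linear approximation of $u$ through to a pointwise gradient bound, and then to the desired $L^2$ estimate.

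I would first establish a pointwise bound. From $A(0)=I_n$ and $\| DA\|_{L^\fz}\le\lz^2$, the Mean Value Theorem gives $|A(x)-I_n|\le C\lz^2$ on $B_{d_A}(0,3)$, so $d_A(x,y)=|x-y|(1+O(\lz^2))$ and $B_{d_A}(0,1)$ is a Lipschitz domain close to a Euclidean ball. The cone comparison property for absolute minimizers of $\mathscr F_\fz$ with respect to $A$ (see \cite{ksz}) implies, for every $x\in B_{d_A}(0,2)$,
$$\lip_{d_A}u(x)\le \sup_{y\in\pa B_{d_A}(x,1)} \frac{|u(y)-u(x)|}{d_A(y,x)}.$$
Combining with the flatness $|u-u(0)-\la {\bf a},\cdot\ra|\le \lz$ on $B_{d_A}(x,1)\subset B_{d_A}(0,3)$ and with $d_A(y,x)=|y-x|(1+O(\lz^2))$, this yields $\lip_{d_A}u(x)\le |{\bf a}|+C\lz$. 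Since $\lip_{d_A}u(x)^2=\la A(x)Du(x),Du(x)\ra$ at a.e.\ $x$ and $\la A(x)p,p\ra\ge (1-C\lz^2)|p|^2$, one obtains
$$|Du(x)|^2\le |{\bf a}|^2+C\lz \quad \mbox{for a.e.\ } x\in B_{d_A}(0,2),$$
with $C$ depending on $|{\bf a}|$.

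I would then integrate by parts on $D:=B_{d_A}(0,1)$. Expanding
$$\int_{D}|Du-{\bf a}|^2\,dx=\int_{D}|Du|^2\,dx-2\int_{D}\la Du,{\bf a}\ra\,dx+|{\bf a}|^2|D|,$$
applying the divergence theorem to the field $u{\bf a}$, and using the flatness on $\pa D$ together with the identities $\int_{\pa D}\la{\bf a},\nu\ra\,dS=0$ and $\int_{\pa D}\la{\bf a},y\ra\la{\bf a},\nu\ra\,dS=|{\bf a}|^2|D|$, one gets
$$\int_{D}\la Du,{\bf a}\ra\,dx\ge |{\bf a}|^2|D|-C\lz.$$
Combining with the pointwise bound from the previous step yields $\int_{D}|Du-{\bf a}|^2\,dx\le C\lz|D|$, which is precisely \eqref{lebesgue} after dividing by $|D|$.

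The main obstacle is the cone comparison inequality in the intrinsic $d_A$-geometry, which must be formulated and proved for absolute minimizers of $\mathscr F_\fz$ with respect to $A\in C^{1,1}$; this is a standard but nontrivial fact from the AMLE theory, and its proof is implicit in \cite{ksz}. Once that is granted, the rest of the argument is soft algebra and a routine application of the divergence theorem on the Lipschitz domain $B_{d_A}(0,1)$.
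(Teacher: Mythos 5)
Your proposal is correct and follows essentially the same route as the paper: a pointwise bound $|Du|\le|{\bf a}|+C\lambda$ obtained from the monotonicity of the intrinsic difference quotients $S^+_r u$ (your ``cone comparison'') together with the flatness hypothesis, followed by the Evans--Smart integration-by-parts argument that bounds $\int\langle Du,{\bf a}\rangle$ from below. The only difference is presentational: you write out explicitly the divergence-theorem computation that the paper delegates to Theorem 4.1 of \cite{es11a}, and you work directly on $B_{d_A}(0,1)$ where the paper first passes to a Euclidean ball $B(0,1+C\lambda)$.
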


\begin{proof}
Since
$$(1+C\lz^2)^{-1}|\xi|^2\le  \langle A(x)\xi,\xi\rangle \le (1+C\lz^2)|\xi|^2,
\ \forall x\in\Omega, \ \xi\in\mathbb R^n,$$
we have
$$(1+C\lz )^{-1}|x-y|\le d_{A}(x,\,y)\le (1+C\lz )|x-y|, \ \forall x, y\in\Omega.$$
It  suffices to show that
\begin{equation}\label{lebesgue1}
\bint_{B (0,1+C\lambda)}
\big|Du(x)- {\bf a}\big|^2\,dx\le C\lz.
\end{equation}
By the same argument as in the proof of \cite{es11a} Theorem 4.1,
(\ref{lebesgue1})  follows if
\begin{equation}
\sup_{x\in B(0,1+C\lambda)} |Du(x)|\le |{\bf a}|+C\lz.
\label{grad_bd1}
\end{equation}
 To prove (\ref{grad_bd1}), let
$$S^+_ru(x):=\max_{d_{A}(z,x)=r} \frac{u(z)-u(x)}{r}.$$
A simple modification of the proof of \cite{ksz} Lemma 2.2
shows that $\displaystyle S^+_ru(x)$ is monotone increasing with respect to $r$,
and $$\sqrt{\langle A(x)Du(x),Du(x)\rangle}=\lip_{d_A}u(x)=\lim_{r\to0}S^+_ru(x).$$
This implies
$$|Du(x)|\le (1+C\lz) S^+_1u(x), \ x \in B(0,1+C\lambda).$$
For $x\in B(0, 1+C\lambda)$, if $B_{d_A}(x,1)\subset B_{d_A}(0,3)$  and $d_A(z,x)=1$,
then we have
\begin{eqnarray*}|u(x)-u(z)|&\le&|u(x) -u(0)-\langle{\bf a},x\rangle|+|u(z) {-u(0)}-\langle{\bf a},z\rangle|+
| \langle{\bf a},x-z\rangle|\\\
&&\le 2\lz+|  {\bf a }||x-z|\le |  {\bf a} | +C\lz,
\end{eqnarray*}
which implies that
$$S^+_1u(x)\le |  {\bf a} | +C\lz, \ \forall x\in B(0,1+C\lambda). $$
Hence we have that
$$|Du(x)|\le |  {\bf a} | +C\lz, \ \forall\ x \in B(0, 1+C\lz).$$
{The proof is completed by applying the argument in Theorem 4.1 of~\cite{es11a}.}
\end{proof}

\begin{proof}[Proof of Theorem 5.1]
We want to show that for every $x_0\in\boz$ and for every $\ez>0$,
there exists $r_0>0$ such that
$$\bint_{B_{d_A}(x_0,r)} \big|Du(x)-Du(x_0)\big|^2 \,dx\le \epsilon.$$
{for every $r \le r_0$}.
{As before,} by Lemma 4.2, we may assume that $x_0=0$, $u( 0)=0$ and $A(0)=I_n$.
For an arbitrary $0<\lz<1$, since $u$ is differentiable at $0$,
there exists $r_0<\lz^2$ such that
 \begin{equation}\label{excess1}
\max_{z\in B_{d_A}(0,\,3r)}\big|u(x)- \langle Du(0),x\rangle\big|\le \lz r, \ 0<r\le r_0.
\end{equation}
Set $A_r(x)=A(rx)$ and $\displaystyle u_r(x)=\frac{u(rx)}{r}$. Then $u_r$ is an absolute  minimizer
of $\mathscr{F}_\infty$ associated to $A_r$.
Observe that $\displaystyle\|DA_r\|\le r\|DA\|$ and by \cite{ksz} Lemma 5.4,
$d_{A_r}(rx,ry)=rd_A(x,y)$. Hence $B_{d_A}(0,r)=rB_{d_{A_r}}(0,1)$.
Therefore,  (\ref{excess1}) implies
\begin{equation}
\label{excess2}
\max_{x\in B_{d_{A_r}}(0,\,3 )}\big|u_r(x)- \langle Du(0),x\rangle\big|\le \lz,
\ 0<r\le r_0.
\end{equation}
Now we can apply Lemma 5.2 to conclude that
\begin{eqnarray*}\bint_{B_{d_A}(0,r)}\big|Du(x)-Du( 0)\big|^2\,dx
&&
=\bint_{B_{d_{A_r}}(0,1)}\big|Du_r(x)-Du( 0)\big|^2\,dx\\
&&\le C\lz,
\end{eqnarray*}
{for every $r \le r_0$ and $\lambda$ small enough.} This completes the proof.
\end{proof}

\bigskip
\noindent{\bf Acknowledgement}.
J. Siljander was supported by the Academy of Finland
grant 259363 and a V\"ais\"al\"a foundation travel grant. C . Wang  was partially supported by NSF grants DMS 1265574 and DMS 1001115, and NSFC grant 11128102.
Y. Zhou was supported by
the New Teachers' Fund for Doctor Stations (\# 20121102120031) and Program for New Century Excellent Talents in University (\# NCET-11-0782)  of Ministry of
Education of China, and National Natural Science Foundation of China (\# 11201015).

\noindent Juhana Siljander,

\noindent Department of Mathematics,   University of Helsinki,
Finland
\smallskip

\noindent{\it E-mail}:   \texttt{juhana.siljander@helsinki.fi }

\bigskip

\noindent Changyou Wang

\noindent  Departmet of Mathematics, Purdue University,
150 N. University Street, West Lafayette, IN 47907, USA.

\smallskip

\noindent {\it E-mail}: \texttt{wang2482@purdue.edu }  

\bigskip

\noindent Yuan Zhou

\noindent
Department of Mathematics, Beijing University of Aeronautics and Astronautics, Beijing 100191, P. R. China

\noindent{\it E-mail }:  \texttt{yuanzhou@buaa.edu.cn}



\begin{thebibliography}{99}



\vspace{-0.3cm}
\bibitem{a1}
G. Aronsson,
{\em Minimization problems for the functional $\sup_x F(x, f(x), f' (x))$}.
Ark. Mat. {\bf 6} (1965), 33-53.

 \vspace{-0.3cm}
\bibitem{a2}
 G. Aronsson, {\em Minimization problems for the functional $\sup_x F(x, f(x), f' (x))$. II}.
Ark. Mat. {\bf 6} (1966), 409-431.

\vspace{-0.3cm}
\bibitem{a3}
G.  Aronsson, {\em Extension of functions satisfying Lipschitz conditions}.
Ark. Mat. {\bf 6} (1967), 551-561.

 \vspace{-0.3cm}
\bibitem{a4}
  G. Aronsson,  {\em Minimization problems for the functional $\sup_x F(x, f(x), f' (x))$. III}.
Ark. Mat. {\bf 7} (1969),
 509-512.

\vspace{-0.3cm}
\bibitem{acjs}
S. N. Armstrong, M. G. Crandall, V. Julin, and C. K. Smart, {\em Convexity criteria and uniqueness of absolutely minimizing functions.}
Arch. Ration. Mech. Anal. {\bf 200} (2011), no. 2, 405-443.


\vspace{-0.3cm}
\bibitem{as}
S. Armstrong and C. Smart, {\em An easy proof of Jensen's theorem on the uniqueness of infinity harmonic functions}.
Calc. Var. Partial Differential Equations {\bf 37} (2010), no. 3-4, 381-384.


\vspace{-0.3cm}
 \bibitem{acj}
G. Aronsson, M. G. Crandall and P. Juutinen,   {\em A tour of the theory of
absolutely minimizing functions}.
Bull. Amer. Math. Soc. (N.S.) {\bf 41} (2004), 439-505.



\vspace{-0.3cm}
 \bibitem{bjw}  E. N. Barron, R. R. Jensen, and C. Y. Wang,
{\em The Euler equation and absolute minimizers of $L^\fz$  functionals.} Arch. Ration. Mech. Anal. {\bf 157} (2001),  255-283.


\vspace{-0.3cm}
\bibitem{bfm} T. Bieske, F. Federica and J.   Manfredi,  {\em The Carnot-Carath\'eodory distance and the infinite Laplacian.}
 J. Geom. Anal. {\bf 19} (2009),  737-754.

\vspace{-0.3cm}
\bibitem{c2008}
M. G.  Crandall, {\em A visit with the $\infty$-Laplace equation.} Calculus of variations and nonlinear partial differential equations, 75-122,
Lecture Notes in Math., 1927, Springer, Berlin, 2008.


 \vspace{-0.3cm}
 \bibitem{ceg}
M. G. Crandall, L. C. Evans and R. F. Gariepy,  {\em Optimal Lipschitz extensions
and the infinity Laplacian.} Calc. Var. Partial Differential Equations {\bf 13} (2001), 123-139.

\vspace{-0.3cm}
\bibitem{ce}
M. G.  Crandall and  L. C. Evans, {\em A remark on infinity harmonic functions.} Proceedings of the USA-Chile Workshop on Nonlinear Analysis (Vi\~na del Mar-Valparaiso, 2000), 123-129, Electron. J. Differ. Equ. Conf., 6, Southwest Texas State Univ., San Marcos, TX, 2001.

\vspace{-0.3cm}
\bibitem{CIL}
M. Crandall, H. Ishii, P. Lions, {\em User's guide to viscosity solutions of second order partial differential equations}. Bull. Amer. Math. Soc (N.S.) {\bf 27} (1992), no. 1, 1-67.

\vspace{-0.3cm}
\bibitem{cwy}
M. G.  Crandall, C. Y. Wang, and Y. F. Yu, {\em Derivation of the Aronsson equation for $C^1$- Hamiltonians.} Trans. Amer. Math. Soc. {\bf 361} (2009), no. 1, 103-124.

\vspace{-0.3cm}
\bibitem{de}
D. M. Duc and J. Eells, {\em Regularity of expoentially harmonic functions}.
Int. Jour. of Math., {\bf 2} (1991) no. 4, 395-408.



\vspace{-0.3cm}
\bibitem{es08}
L. C. Evans and O. Savin, {\em $C^{1,\az}$ regularity for infinity harmonic
functions in two dimensions}.
Calc. Var. Partial Differential Equations {\bf 32}  (2008), 325-347.

\vspace{-0.3cm}
\bibitem{es11a} L. C. Evans and C. K. Smart,
{\em Everywhere differentiability of infinity harmonic functions.}
 Calc. Var. Partial Differential Equations {\bf 42} (2011),  289-299.

\vspace{-0.3cm}
\bibitem{es11b} L. C. Evans and C. K. Smart,
{\em Adjoint methods for the infinity Laplacian partial differential equation.}
 Arch. Ration. Mech. Anal. {\bf 201} (2011),   87-113.


 \vspace{-0.3cm}
 \bibitem{j1993}
R. Jensen,   {\em Uniqueness of Lipschitz extensions: minimizing the sup norm of
the gradient}.
Arch. Ration. Mech. Anal. {\bf 123} (1993), 51-74.


\vspace{-0.3cm}
\bibitem{jwy}
R. Jensen, C. Y. Wang, and Y. F. Yu, {\em Uniqueness and nonuniqueness of viscosity solutions to Aronsson's equation.}
Arch. Ration. Mech. Anal. {\bf 190} (2008), no. 2, 347-370.

 \vspace{-0.3cm}
 \bibitem{ksz}
 P. Koskela, N. Shanmugalingam and Y. Zhou, P. Koskela, N. Shanmugalingum and Y. Zhou,
{\em Intrinsic geometry and analysis of diffusion process and $L^\infty$-variational problem. }    
Arch. Rational Mech. Anal., in press.
 
 
  \vspace{-0.3cm}
 \bibitem{ksz2} P. Koskela, N. Shanmugalingam and Y. Zhou, 
 {\em $L^\infty$-Variational problem associated to Dirichlet forms.}  Math. Res. Lett.  {\bf 19} (2012), 1263-1275.

\vspace{-0.3cm}
\bibitem{l}
E. Lindgren, {\em On the regularity of solutions of the inhomogeneous infinity Laplace equation.}
Proc. Amer. Math. Soc. {\bf 142} (2014), no. 1, 277-288.

\vspace{-0.3cm}
\bibitem{l1}
G. Lieberman,  {\em On the regularity of the minimizer of a functional with exponential
growth}. Comm. Math. Univ. Carolin, {\bf 33} (1992), no. 1, 45-49.

\vspace{-0.3cm}
\bibitem{l2}
G. Lieberman, {\em Gradient Estimates for a class of Elliptic Systems}. Annali di Matematica
Pura ed Applicata (IV), Vol. CLXIV (1993), 103-120.


\vspace{-0.3cm}
\bibitem{lu68}
O. A. Ladyzhenskaya and N. N. Ural'tseva,
Linear and quasilinear elliptic equations. Translated from the Russian by Scripta Technica, Inc. Translation editor: Leon Ehrenpreis. Academic Press, New York, 1968.

\vspace{-0.3cm}
\bibitem{pssw}
Y. Peres, O. Schramm, S. Sheffield and D. B. Wilson, {\em Tug-of-war and the
infinity Laplacian}.
J. Amer. Math. Soc. {\bf 22} (2009), 167-210.


\vspace{-0.3cm}
\bibitem{s05}
O. Savin, {\em $C^1$ regularity for infinity harmonic functions in two dimensions}.
Arch. Ration. Mech. Anal.  {\bf 176}  (2005), 351-361.


\vspace{-0.3cm}
\bibitem{wy}
C. Y. Wang and Y. F. Yu,  {\em $C^1$-regularity of the Aronsson equation in
${\mathbb R}^2$}.
 Ann. Inst. H. Poincar\'e Anal. Non Lin\'eair\'e {\bf 25} (2008),  659-678.

\vspace{-0.3cm}
\bibitem{wy1}
C. Y. Wang and Y. F. Yu, {\em
$C^1$-boundary regularity of planar infinity harmonic functions.}
Math. Res. Lett. {\bf 19} (2012), no. 4, 823-835.

\end{thebibliography}
\end{document}